%
%
%
%
%

\documentclass[10pt,oneside,a4paper]{article} 	

\usepackage[fleqn]{amsmath} 				
\usepackage{amsthm,amsfonts,latexsym,amssymb,amscd} 	
\usepackage[mathscr]{eucal}   				
\usepackage[all]{xy}							
\usepackage[draft=false]{hyperref} 		
\usepackage{framed}

\newcommand{\B}{\mathcal{B}}

\renewcommand{\H}{\mathcal{H}} 	




\newcommand{\Cg}{\mathfrak{C}}

\newcommand{\Fg}{\mathfrak{F}}

\newcommand{\CC}{{\mathbb{C}}}

\newcommand{\NN}{{\mathbb{N}}}

\newcommand{\RR}{{\mathbb{R}}}

\newcommand{\As}{{\mathscr{A}}}

\newcommand{\Is}{{\mathscr{I}}}

\newcommand{\Ps}{{\mathscr{P}}}	
\newcommand{\Ss}{{\mathscr{S}}}

\DeclareFontFamily{U}{rsfs}{\skewchar\font127 }
\DeclareFontShape{U}{rsfs}{m}{n}{%
   <5> <6> rsfs5
   <7> rsfs7
   <8> <9> <10> <10.95> <12> <14.4> <17.28> <20.74> <24.88> rsfs10
}{}
\DeclareSymbolFont{rsfs}{U}{rsfs}{m}{n}
\DeclareSymbolFontAlphabet{\scr}{rsfs}

\newcommand{\Cf}{\scr{C}}

\newcommand{\Mf}{\scr{M}}
\newcommand{\Sf}{\scr{S}}

	
\DeclareMathOperator{\CCl}{\CC l}

\DeclareMathOperator{\spa}{span}
\DeclareMathOperator{\id}{Id}

\DeclareMathOperator{\dom}{Dom}

\DeclareMathOperator{\ev}{ev}

\DeclareMathOperator{\Ad}{Ad}

\DeclareMathOperator{\End}{End}

\renewcommand{\emph}{\textbf} 								
\newcommand{\mip}[2]{(#1\mid #2)}							
\newcommand{\ip}[2]{\langle #1\mid #2\rangle}			

\newcommand{\hlink}[2]{\href{#1}{\texttt{#2}}} 		


\newtheorem{theorem}{Theorem}[section]					

\newtheorem{lemma}[theorem]{Lemma}
\newtheorem{proposition}[theorem]{Proposition}

\newtheorem{definition}[theorem]{Definition}
\newtheorem{remark}[theorem]{Remark}

\numberwithin{equation}{section}  	
\setlength{\parindent}{0pt} 				
\pagestyle{plain}
\setlength{\textwidth}{392pt}			


\title{\textbf{A Remark on Gel'fand Duality for \\ Spectral Triples\footnote{Published in: Bull.~Korean~Math.~Soc.~48 (2011) n.~3:505-521.}}}

\author{\normalsize Paolo Bertozzini$^a$,  
Roberto Conti$^b$\footnote{Current address: Dipartimento di Scienze, Universit\`a di Chieti-Pescara ``G. D'Annunzio'', Viale Pindaro 42, I-65127 Pescara, Italy.}, 
Wicharn Lewkeeratiyutkul$^c$
\\
\normalsize  \textit{$^a$Department of Mathematics and Statistics, Faculty of Science and Technology}
\\
\normalsize \textit{Thammasat University, Bangkok 12121, Thailand}
\\
\normalsize e-mail: \texttt{paolo.th@gmail.com}
\\
\normalsize \textit{$^b$Mathematics, School of Mathematical and Physical Sciences,} 
\\
\normalsize \textit{University of Newcastle, Callaghan, NSW 2308, Australia}  
\\ 
\normalsize e-mail: \texttt{conti@sci.unich.it}
\\ 
\normalsize \textit{$^c$Department of Mathematics and Computer Science, }
\\
\normalsize \textit{Faculty of Science, Chulalongkorn University, Bangkok 10330, Thailand}
\\
\normalsize e-mail: \texttt{Wicharn.L@chula.ac.th}
}

\date{\normalsize original arXiv version: 18 December 2008, 
published version: 11 March 2011, 
\\
new arXiv version: 26 December 2011}

\begin{document}

\maketitle

\begin{abstract} \noindent
We present a duality between the category of compact Riemannian spin manifolds (equipped with a given spin bundle and charge conjugation) with isometries as morphisms and a suitable ``metric'' category of spectral triples over commutative 
pre-\hbox{C*-algebras}. 
We also construct an embedding of a ``quotient'' of the category of spectral triples introduced in~\cite{BCL3} into the latter metric category. 
Finally we discuss a further related duality in the case of orientation and spin-preserving maps between manifolds of fixed dimension. 

\smallskip

\noindent
\emph{Keywords:} Spectral Triple, Spin Manifold, Category.

\smallskip

\noindent
\emph{MSC-2010:} 			
					46L87,			
					46M15, 			
					18F99, 			
					15A66.			
\end{abstract}

\section{Introduction}

Although the main strength of non-commutative geometry is a full treatment of non-commutative algebras as ``duals of geometric spaces'', the foundation of the theory relies on the construction of suitable categorical equivalences, resp.~anti-equivalences (i.e.~covariant, resp.~contravariant functors that are isomorphisms of categories ``up to natural transformations'') between categories of ``geometric spaces'' and categories of commutative algebras of functions over these spaces, or some closely related structures. (For the elementary background in ``category theory'' the reader can refer to the on-line introduction by 
J.~Baez~\cite{B} and the classical books by S.~McLane~\cite{M} and M.~Barr-C.~Wells~\cite{BW}.)

Typical examples of such (anti-)equivalences 
are listed below, itemized by the name of the people who worked them out: 
\begin{itemize}
\item
\emph{Hilbert:} 
between algebraic sets and finitely generated algebras over an algebraically closed field~\cite{H};   
\item
\emph{Stone:}
between totally disconnected compact Hausdorff topological spaces and Boolean algebras~\cite{St1,St2}; 
\item
\emph{Gel'fand-Na\u\i mark:} 
between the category of continuous maps of compact Hausdorff topological spaces and the category of unital involutive homomorphisms of unital commutative C*-algebras~\cite{G2,GN}; 
\item
\emph{Halmos-von Neumann:}
between the category of measurable maps of measure spaces and the category of unital involutive homomorphisms of commutative von Neumann algebras~\cite{HvN}; 
\item
\emph{Serre-Swan:} between the category of vector bundle maps of finite-dimensional locally trivial 
vector bundles over a compact Hausdorff topological space and the category of homomorphisms of finite projective modules over a commutative unital C*-algebra~\cite{Se,Sw}; 
\item
\emph{Cartier-Grothendieck:} 
between the category of commutative schemes (ringed spaces) in algebraic geometry and the category of topoi (sheaves over topological spaces); see I.~Dolgachev's historical notes ~\cite[Section~1]{D};
\item
\emph{Takahashi:}
between the category of Hilbert bundles on (different) compact Hausdorff spaces and the category of Hilbert C*-modules over (different) commutative unital C*-algebras~\cite{Ta1,Ta2};
\end{itemize}

Even more dualities arise when the spaces in question are equipped with additional structure, most notably a group structure
or the like (see Pontryagin-Van Kampen~\cite{Po,VK}, Tannaka-Kre\u\i n~\cite{Ta,Kr} and Doplicher-Roberts~\cite{DR}).

In this paper we will focus our attention on the Gel'fand-Na\u\i mark duality, to which the other dualities are related in significant way.
In short, the fundamental message that can be read off from the celebrated Gel'fand-Na\u\i mark theorem on commutative 
C*-algebras is that, at the ``topological level'', the information on a ``space'' can be completely encoded in (and recovered from) a 
suitable ``algebraic structure''. 

In applications to physics (at least for those branches that are dealing with ``metric structures'' such as general relativity), it would be important to ``tune'' Gel'fand-Na\u\i mark's correspondence in order to embrace classes of spaces with more detailed geometric structures (e.g.~differential, metric, connection, curvature).

In recent times, Connes' non-commutative geometry~\cite{C,FGV} has emerged as the most outstanding proposal in this direction, based on the notion of spectral triple.

In this note we provide a further example of categorical anti-equivalence between Riemannian spin-manifolds and 
commutative Connes' spectral triples (see theorem~\ref{th: co-re}). 
This line of thought is expected to play an important role in future developments of the categorical structure of non-commutative geometry, and spectral triples in particular (see~\cite{BCL0}), as well as in the study of (geometric) quantization, where the construction of functorial relations between ``commutative'' and ``quantum'' spaces are central points of investigation. 

Although the idea of reconstructing a smooth manifold out of a commutative spectral triple has been latent for some time
(see~\cite{C4,C10,Re1,RV,C17,C12}), the point to promote it to a categorical level seems to be new. 
Our main tool is the notion of metric morphisms of spectral triples, namely those preserving Connes' distance on the state space.

In the second part of the paper, we examine some connection between the category of ``metric spectral triples'' (on which the equivalence result is based) and our previous work on morphism of spectral triples~\cite{BCL3}. 
It should be possible to provide other equivalence results in terms of categories of spectral triples based on different notions of morphism (at least for some classes of Riemannian manifolds); some of these issues are presently under investigation 
(see~\cite[Section~4.1]{BCL0} for an overview). 

\smallskip

It should be remarked that Connes' distance formula has been systematically adopted by M.~Rieffel as the backbone of his notion of quantum compact metric space (see~\cite{Ri} and references therein).
Although we present our result in the framework of Connes' spectral triples, it is likely that our ideas might find some application also in Rieffel's framework. 

\smallskip

In order to keep the length of this paper as short as possible, we will refer to the literature for all the background material and only recall the basic definitions. 

\subsection{Spectral Triples}

Following A.~Connes' axiomatization (see~\cite{C, FGV,C17} for all the details), a \emph{compact spectral triple} $(\As,\H,D)$ consists of 
\begin{itemize}
\item[a)] 
a unital pre-C*-algebra $\As$ (that is sometimes required to be closed under holomorphic functional calculus), 
\item[b)] 
a (faithful) representation $\pi: \As \to \B(\H)$ of the algebra $\As$ on a Hilbert space $\H$ and 
\item[c)] 
a \emph{Dirac operator}, i.e.~a (generally unbounded) self-adjoint operator $D$,  
with compact resolvent $(D-\lambda)^{-1}$ for every $\lambda \in \CC - \RR$ and such that $[D,\pi(a)]_{-}\in \B(\H)$  
for all $a \in \As$, where $[x,y]_\pm:= xy\pm yx$ denote the anticommutator and the commutator, respectively, of $x,y \in \B(\H)$.
\end{itemize}

A spectral triple is called \emph{even} if it is equipped with a grading operator, i.e.~a bounded self-adjoint operator 
$\Gamma \in \B(\H)$ such that:
\begin{gather*}
\Gamma^2=\text{Id}_\H; \quad  [\Gamma, \pi(a)]_{-}=0 \quad \forall a \in \As; \quad
[\Gamma, D]_{+}=0. 
\end{gather*}
A spectral triple without grading is called \emph{odd}. 

\medskip 

A spectral triple is \emph{regular} if the functions $\Xi_x: t\mapsto \exp(it|D|)x\exp(-it|D|)$ are ``smooth'' 
i.e.~$\Xi_x\in \text{C}^\infty(\RR, \B(\H))$ for every $x \in \Omega_D(\As)$, 
where we define 
\begin{equation*}
\Omega_D(\As):=\spa\{\pi(a_0)[D,\pi(a_1)]_- \cdots [D,\pi(a_n)]_- \  |  \  n\in \NN, \ a_0, \dots, a_n \in \As\}.
\end{equation*}
This regularity condition can be equivalently expressed requiring that, for all $a\in \As$, $\pi(a)$ and $[D,\pi(a)]_-$ are contained in 
$\cap_{m=1}^\infty \text{Dom}\, \delta^m$, where $\delta$ is the derivation given by $\delta(x):=[|D|, x]_{-}$. 

\medskip 

The spectral triple is \emph{$n$-dimensional} iff there exists an integer $n$ such that the Dixmier trace of $|D|^{-n}$ is finite 
non-zero. 
A spectral triple is \emph{$\theta$-summable} if $\exp(-tD^2)$ is a trace-class operator for every $t>0.$

A spectral triple is \emph{real} if it is equipped with a real structure i.e.~an antiunitary operator $J: \H \to \H$ such that: 
\begin{gather*}
[\pi(a), J\pi(b^*)J^{-1}]_{-}=0 \quad \forall a,b \in \As; \\
[[D, \pi(a)]_{-}, J\pi(b^*)J^{-1}]_{-}=0 \quad \forall a,b \in \As, \quad {\emph{first-order condition};} \\ 
J^2=\pm\text{Id}_\H;  \quad [J,D]_{\pm}=0; \\
\text{and, only in the even case,} \quad
[J,\Gamma]_{\pm}=0,
\end{gather*}
where the choice of $\pm$ in the last three formulas depends on the ``dimension'' $n$ of the spectral triple modulo $8$ according to the following table: 
\begin{center}\label{tb: J}
\begin{tabular}{|l|c|c|c|c|c|c|c|c|}
\hline
$n$									&$0$	&$1$	&$2$	&$3$	&$4$	&$5$	&$6$	&$7$	\\
	\hline
$J^2=\pm\text{Id}_\H$	&	$+$	&	$+$		&	$-$		&	$-$		&	$-$		&	$-$		&	$+$		& $+$		\\
	\hline
$[J,D]_{\pm}=0$				&	$-$	&	$+$		&	$-$		&	$-$		&	$-$		&	$+$		&	$-$		& $-$		\\
\hline
$[J,\Gamma]_{\pm}=0$	&	$-$	&			&	$+$		&			&	$-$		&			&	$+$		& 		\\
\hline
\end{tabular}
\end{center}

\medskip

A spectral triple is \emph{finite} if $\H_\infty := \cap_{k=1}^\infty \text{Dom}\, D^k$ is a finite projective $\As$-bimodule 
and \emph{absolutely continuous} if, there exists a Hermitian form $(\xi,\eta)\mapsto\mip{\xi}{\eta}$ on $\H_\infty$ such that, for all $a\in \As$, 
$\ip{\xi}{\pi(a)\eta}$ is the Dixmier trace of $\pi(a)\mip{\xi}{\eta}|D|^{-n}$.

\medskip

An $n$-dimensional spectral triple is said to be \emph{orientable} if there is a Hochschild cycle
$c=\sum_{j=1}^m a^{(j)}_0\otimes a^{(j)}_1\otimes \cdots \otimes a^{(j)}_n$ such that its ``representation'' on the Hilbert space $\H$, $\pi(c)=\sum_{j=1}^m\pi(a^{(j)}_0)[D, \pi(a^{(j)}_1)]_-\cdots[D,\pi(a^{(j)}_n)]_-$ is the grading operator in the even case or the identity operator in the odd case. (In the following, in order to simplify the discussion, we will always refer to a ``grading operator'' $\Gamma$ that actually coincides with the grading operator in the even case and that is by definition the identity operator in the odd case.)

\medskip

A real spectral triple is said to satisfy \emph{Poincar\'e duality} if its fundamental class in the KR-homology of 
$\As\otimes \As^{\rm op}$ induces (via Kasparov intersection product) an isomorphism between the K-theory K$_\bullet(\As)$ and the K-homology K$^\bullet (\As)$ of $\As$. In~\cite{RV} some of the axioms are reformulated in a different form, in particular this condition is replaced by the requirement that the C*-module completion of $\H_\infty$ is a Morita equivalence bimodule between (the norm completions of) $\As$ and $\Omega_D(\As)$.

\medskip

A spectral triple will be called \emph{commutative}, or \emph{Abelian}, whenever $\As$ is commutative. 

\medskip 

Finally a spectral triple with real structure $J$ and grading $\Gamma$ is \emph{irreducible} if there is no non-trivial closed subspace in $\H$ that is invariant for 
$\pi(\As), D, J, \, \Gamma$.

\subsection{Reconstruction Theorem (Commutative Case)}

Let $M$ be a real compact orientable Riemannian $m$-dimensional spin C$^\infty$ manifold with a given volume form $\mu_M$. 
Let us denote (see~\cite{S} for details) by $S(M)$ a given irreducible complex spinor bundle over $M$ i.e.~a bundle over $M$ equipped with a left action $c: \CCl^{(+)}(T(M))\otimes S(M)\to S(M)$ of the ``Clifford'' 
bundle $\CCl^{(+)}(T(M))$ inducing a bundle isomorphism between $\CCl^{(+)}(T(M))$ and $\End(S(M))$, where
following~\cite[Page~373]{FGV} we denote by $\CCl^{(+)}(T(M))$ the complexified Clifford bundle of $M$ if 
$\dim M$ is even and respectively its even subalgebra bundle $\CCl^+(T(M))$ if $\dim M$ is odd.
Let $[S(M)]$ be the spin$^c$ structure of $M$ determined by $S(M)$. 

\medskip

Recall that an orientable Riemannian manifold is spin$^c$ if it admits a complex irreducible spinor bundle~\cite[Definition~7]{S}. A spin$^c$ manifold usually admits several inequivalent spin$^c$ structures and that for a given spin$^c$ structure, a complex irreducible spinor bundle over $M$ is determined only up to 
(Hermitian) bundle isomorphism. A spin$^c$ manifold is spin if and only if it admits a complex spinor bundle with a charge conjugation~\cite[Definition~8]{S}. A spin manifold usually admits several inequivalent spin structures even for the same spin$^c$ structure and that for a given spin structure a conjugation operator is determined only up to intertwining with (Hermitian) bundle isomorphisms.

\medskip

Let $C_M$ be a given ``spinorial'' charge conjugation on $S(M)$ i.e.~an antilinear Hermitian bundle morphism such that 
$C_M\circ C_M=\pm\id_{S(M)}$ (signs depending on $\dim M$ modulo $8$ as in the table in section~\ref{tb: J}) that is ``compatible'' with the charge conjugation $\kappa$ in $\CCl^{(+)}(T(M))$, which is the composition of the natural grading operator and the canonical conjugation,  i.e.~$C_M(\beta(p)\cdot \sigma(p))=\kappa(\beta(p))\cdot C_M(\sigma(p))$, for any section 
$\beta \in \Gamma(\CCl^{(+)}(T(M)))$ of the Clifford bundle and any section $\sigma\in \Gamma(S(M))$ of the spinor bundle. 
We denote by $[(S(M),C_M)]$ the spin structure on $M$ determined by $C_M$.

\medskip

Let $\As_M:=$C$^\infty(M; \CC)$ be the commutative pre-C*-algebra of smooth complex valued functions on $M$. We denote by $\pi_M$ its representation by pointwise multiplication on the space $\H_M:=\text{L}^2(M,S(M))$, the completion of the space $\Gamma^\infty(M,S(M))$ of smooth sections of the spinor bundle $S(M)$ equipped with the inner product 
$\ip{\sigma}{\tau}:=\int_M\ip{\sigma(p)}{\tau(p)}_p\,\text{d}\mu_M$, where $\ip{\cdot}{\cdot}_p$ is the unique inner product on $S_p(M)$ compatible with the Clifford action and the Clifford product.  
Note that the spinorial charge conjugation $C_M$ (being unitary on the fibers) has a unique antilinear unitary extension 
$J_M:\H_M\to \H_M$ determined by $(J_M\sigma)(p):=C_M(\sigma(p))$ for $\sigma\in \Gamma^\infty(S(M))$ and $p\in M$.

\medskip

Let $\Gamma_M$ be the unique unitary extension on $\H_M$ of the operator $\Lambda_M$ on $\Gamma(S(M))$ acting by left action of the chirality element $\gamma\in\Gamma(\CCl^{(+)}(T(M)))$, that implements the grading $\chi$ of 
$\Gamma(\CCl^{(+)}(T(M)))$ as inner automorphism (the grading is actually the identity in odd dimension).

\medskip

Denote by $D_M$ the Atiyah-Singer Dirac operator on the Hilbert space $\H_M$, i.e.~the closure of the operator that on 
$\Gamma^\infty(S(M))$ is obtained by ``contracting'' the unique spin covariant derivative $\nabla^S$ (induced on 
$\Gamma^\infty(S(M))$ by the Levi-Civita covariant derivative of $M$, see~\cite[Theorem~9.8]{FGV}) with the Clifford multiplication. For a detailed discussion on Atiyah-Singer Dirac operators we refer to~\cite{BGV, LM, S}. 

\medskip

We have the following fundamental results: 
\begin{theorem}[Connes, see e.g.~\cite{C,C3} and Section~11.1 in~\cite{FGV}]\label{th: Co} 
 Given an orientable compact spin Riemannian $m$-dimensional differentiable manifold $M$, with a given complex spinor bundle 
$S(M)$, a given spinorial charge conjugation $C_M$ and a given volume form $\mu_M$, the data $(\As_M,\H_M,D_M)$ defines a commutative regular finite absolutely continuous $m$-dimensional spectral triple that is real, with real structure $J_M$, orientable, with grading 
$\Gamma_M$, and satisfies Poincar\'e duality. 
\end{theorem}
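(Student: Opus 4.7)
The plan is to verify each axiom listed in the spectral triple formalism of the previous subsection by direct identification of the relevant classical object from spin geometry, appealing throughout to standard references such as~\cite{FGV,LM,BGV,S}. First, I would observe that $\As_M=\text{C}^\infty(M;\CC)$ is a unital commutative pre-C*-algebra stable under holomorphic functional calculus (since $M$ is compact, any holomorphic function of a smooth function is smooth), and that $\pi_M$ on $\H_M=\text{L}^2(M,S(M))$ is faithful because $\pi_M(f)=0$ implies that $f$ vanishes almost everywhere, and by smoothness everywhere. Self-adjointness of $D_M$ on its natural domain follows from the essential self-adjointness of the Atiyah-Singer Dirac operator on a complete Riemannian manifold, while compactness of the resolvent is a consequence of ellipticity together with the Rellich-Kondrachov compact embedding of Sobolev spaces on compact manifolds. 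The boundedness of the commutator $[D_M,\pi_M(f)]_-$ is the classical computation identifying it with Clifford multiplication by the gradient of $f$, which is a bounded bundle endomorphism when $f \in \text{C}^\infty(M)$.

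Next I would address regularity, dimension, finiteness, and absolute continuity. Regularity follows from the pseudodifferential calculus: both $\pi_M(f)$ and $c(df)$ are order-zero classical pseudodifferential operators, $|D_M|$ is of order one, and iterated commutators with $|D_M|$ stay inside the pseudodifferential algebra, hence preserve $\cap_m \text{Dom}\,\delta^m$. The $m$-dimensionality, i.e.~finiteness and non-vanishing of the Dixmier trace of $|D_M|^{-m}$, is a direct consequence of the Weyl asymptotics for the eigenvalues of $D_M$ (or equivalently of Connes' trace theorem relating the Dixmier trace to the Wodzicki residue). Finiteness reduces to the identification $\H_\infty=\Gamma^\infty(M,S(M))$ via elliptic regularity, which is then a finite projective $\As_M$-bimodule by the Serre-Swan theorem applied to the spinor bundle. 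Absolute continuity is again Connes' trace theorem, which, when applied to the zero-order operator $\pi_M(f)\mip{\xi}{\eta}|D_M|^{-m}$, recovers up to a universal constant the integral $\int_M f\langle\xi(p),\eta(p)\rangle_p\,\text{d}\mu_M$, so the Hermitian form coincides with the fiberwise inner product on $S(M)$.

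For the real and even structures I would proceed as follows. The antiunitary operator $J_M$ inherits from the fiberwise charge conjugation $C_M$ the sign relations $J_M^2=\pm\id$ and $[J_M,D_M]_\pm=0$, and in the even case $[J_M,\Gamma_M]_\pm=0$, with the signs dictated by $\dim M \mod 8$; this is exactly the standard table for spin geometry~\cite[Section~9.4]{FGV}. Commutation $[\pi_M(a),J_M\pi_M(b^*)J_M^{-1}]_-=0$ follows because $J_M\pi_M(b^*)J_M^{-1}$ acts as $\pi_M(b)$ (multiplication by a scalar function commutes with fiberwise conjugation and with $\pi_M(a)$ by commutativity of $\As_M$), and the first-order condition follows analogously using $[D_M,\pi_M(a)]_-=c(da)$ which is a bundle endomorphism over the identity and thus commutes pointwise with $\pi_M(b)$. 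Orientability is obtained by expressing the chirality $\gamma$ (or $\id$ in the odd case) locally through $c(dx^{i_1})\cdots c(dx^{i_m})$ on a chart, then patching via a partition of unity to produce a global Hochschild cycle $c=\sum_j a^{(j)}_0\otimes\cdots\otimes a^{(j)}_m$ whose image $\pi(c)$ is $\Gamma_M$ in the even case and $\id$ in the odd case.

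The hard part will be Poincar\'e duality, which does not reduce to a pointwise computation but requires genuine homological content. The plan is to identify the fundamental class associated with $(\As_M,\H_M,D_M,J_M,\Gamma_M)$ in the KR-homology of $\As_M\otimes\As_M^{\rm op}$ with the Kasparov class of the Dirac operator of the spin manifold $M$ (viewed through the diagonal embedding), and then to invoke the classical result of Kasparov~\cite{LM}, which asserts that for a compact spin manifold the Dirac class implements KR-theoretic Poincar\'e duality between $\text{K}_\bullet(M)$ and $\text{K}^\bullet(M)$ via the Kasparov intersection product. Since all the other axioms have been matched with standard geometric objects, this identification is essentially a matter of unwinding definitions, but it is where the full weight of bivariant K-theory enters and where the most care is needed in reconciling conventions (for instance the choice between $\CCl^{(+)}$ in even vs.~odd dimension, and the corresponding form of the grading).
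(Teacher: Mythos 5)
Your proposal is correct in outline and follows exactly the route of the sources the paper itself relies on: the paper states this theorem without proof, citing Connes and Section~11.1 of~\cite{FGV}, where the argument is precisely the axiom-by-axiom verification you sketch (bounded commutators as Clifford multiplication, Rellich for compact resolvent, pseudodifferential calculus for regularity, Weyl asymptotics/Connes' trace theorem for dimension and absolute continuity, the chirality element for orientability, and Kasparov's Dirac-class duality for the Poincar\'e duality axiom). So there is nothing to compare against beyond noting that your plan matches the standard proof in the cited references, with only minor quibbles of attribution (the KK-theoretic Poincar\'e duality statement is due to Kasparov and is better cited from the K-homology literature than from~\cite{LM}) and the need, in a full write-up, to check that the Hochschild chain obtained by partition of unity is genuinely a cycle with the required antisymmetry.
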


\begin{theorem}[Connes~\cite{C4,C17}]\label{th: Co-Re}
Let $(\As,\H,D)$ be an irreducible commutative real (with real structure $J$ and grading $\Gamma$) strongly regular (in the sense of~\cite[Definition~6.1]{C17}) $m$-dimensional finite absolutely continuous orientable spectral triple, with totally antisymmetric (in the last $m$ entries) Hochschild cycle, and satisfying Poincar\'e duality.
The spectrum of (the norm closure of) $\As$ can be endowed, in a unique way, with the structure of an $m$-dimensional
connected compact orientable spin Riemannian manifold $M$ with an
irreducible complex spinor bundle $S(M)$, a charge conjugation $J_M$ and a grading $\Gamma_M$ such that:
\begin{equation*}
\As\simeq C^\infty(M; \CC), \quad \H\simeq\text{L}^2(M,S(M)), \quad D \simeq D_M, \quad J \simeq J_M, \quad 
\Gamma \simeq \Gamma_M.
\end{equation*}
\end{theorem}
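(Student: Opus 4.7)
The plan is to reconstruct the ingredients of a compact spin Riemannian manifold one by one from the axioms of the spectral triple, following the strategy of~\cite{C17}. By Gel'fand-Na\u\i mark duality, the norm completion $\cj{\As}$ is isomorphic to $C(M;\CC)$ for the compact Hausdorff space $M:=\Sp(\cj{\As})$; the unitality, commutativity and faithfulness of $\pi$ deliver $M$ already at the topological level. To endow $M$ with a smooth atlas, I would exploit the strong regularity hypothesis, which ensures that $\As$ is stable under the smooth functional calculus with respect to the derivation $\delta=[|D|,\cdot]_-$ and makes $\As$ a Fr\'echet pre-C*-algebra dense in $C(M;\CC)$. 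The orientability axiom supplies a totally antisymmetric (in the last $m$ entries) Hochschild cycle $c=\sum_j a_0^{(j)}\otimes\cdots\otimes a_m^{(j)}$ with $\pi(c)=\Gamma$; combined with finiteness and an analysis of the principal symbol of $D$, the antisymmetry forces the differentials $da_1^{(j)},\ldots,da_m^{(j)}$ to span the cotangent space at each $p\in M$, providing local charts with smooth transition functions and identifying $\As\simeq C^\infty(M;\CC)$.

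Next I would reconstruct the spinor bundle, the metric and the Dirac operator. The finiteness axiom together with Serre-Swan produces a Hermitian smooth vector bundle $S(M)$ over $M$ with $\H_\infty\simeq\Gamma^\infty(M,S(M))$, and absolute continuity identifies the inner product on $\H_\infty$ with an $L^2$-inner product against the measure $\mu_M$ associated (via the Dixmier trace) with $|D|^{-m}$; this $\mu_M$ turns out to be the Riemannian volume of the metric $g$ on $M$ recovered through Connes' distance formula
\begin{equation*}
d(p,q):=\sup\{|a(p)-a(q)|\st a\in\As,\ \|[D,\pi(a)]_-\|\le 1\},
\end{equation*}
which coincides with the geodesic distance of $g$. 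The principal symbol of $D$ equips $S(M)$ with a Clifford action $\CCl^{(+)}(T(M))\to\End(S(M))$, and Poincar\'e duality together with irreducibility forces $S(M)$ to be an irreducible complex spinor bundle, yielding a spin$^c$ structure; the antiunitary $J$ (with the sign conventions of the table in section~\ref{tb: J}) promotes it to a spin structure with charge conjugation $J_M:=J$, and $\Gamma_M:=\Gamma$ realizes the chirality. Finally, to match $D$ with the Atiyah-Singer operator $D_M$, I would invoke the first-order condition: in the commutative case $J\pi(\As)J^{-1}\subseteq\pi(\As)'$, and Connes' analysis of inner fluctuations shows that $D$ can differ from $D_M$ only by a self-adjoint term of the form $A+JAJ^{-1}$, which the first-order condition forces to vanish, yielding $D\simeq D_M$.

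The main obstacle is concentrated in the first step: passing from the purely algebraic regularity and Hochschild data to a genuine smooth atlas on $M$, i.e.~proving that the ``Jacobian matrix'' built from the Hochschild components is locally invertible (so that the $a_j^{(k)}$ really furnish coordinates rather than mere continuous functions) and that the Sobolev-type spaces defined spectrally via $|D|$ match the classical ones coming from the candidate smooth structure. This is the technical heart of Connes' reconstruction theorem and what carries the result far beyond the topological Gel'fand-Na\u\i mark duality; once it is in place, the remaining steps reduce to translating standard results on spin geometry (Serre-Swan, the symbol calculus for Dirac-type operators, and the classification of internal fluctuations) into statements about $M$, $S(M)$ and $D_M$.
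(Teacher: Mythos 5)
There is no proof of this statement in the paper for your attempt to be compared with: Theorem~\ref{th: Co-Re} is quoted as an external result of Connes~\cite{C4,C17} (after the partial/incorrect attempts of~\cite{Re1,RV}), and the paper only records it as input for the categorical constructions that follow. What you have written is accordingly a summary of Connes' known strategy rather than a proof, and it has a genuine gap exactly where the theorem lives: you yourself flag that passing from the algebraic data (strong regularity, finiteness, the totally antisymmetric Hochschild cycle with $\pi(c)=\Gamma$) to an actual smooth atlas --- proving that suitable components $a_1^{(j)},\dots,a_m^{(j)}$ give local coordinates with invertible ``Jacobian'', and that the spectrally defined Sobolev scales of $|D|$ agree with the classical ones --- is ``the main obstacle'', and you defer it. But that step (Connes' self-chart and multiplicity-one arguments in~\cite{C17}) is not an obstacle inside the proof, it \emph{is} the theorem: everything before it is Gel'fand--Na\u\i mark, and everything after it (Serre--Swan, symbol calculus, spin$^c$ versus spin via $J$) was already available in the case $\As=C^\infty(M;\CC)$, e.g.~\cite[Theorem~11.2]{FGV}. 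A proposal that assumes this step cannot count as a proof of the statement.

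Two secondary points should also be corrected. First, irreducibility of the spectral triple is what yields \emph{connectedness} of $M$ (this is precisely how it is used in the paper's proof of theorem~\ref{th: co-re}); the fiberwise irreducibility of the spinor bundle, i.e.\ $\CCl^{(+)}(T(M))\simeq\End(S(M))$, comes instead from Poincar\'e duality (equivalently the Morita condition of~\cite{RV}), so your phrase ``Poincar\'e duality together with irreducibility forces $S(M)$ to be irreducible'' conflates the two roles. Second, the identification $D\simeq D_M$ is not obtained by invoking the vanishing of inner fluctuations $A+JAJ^{-1}$: the fluctuation argument presupposes the discrepancy has that special form. The correct route is to note that $D$ and $D_M$ have the same principal symbol (Clifford multiplication), so $D-D_M$ is a self-adjoint bundle endomorphism commuting with $\pi(\As)$, and then the first-order condition, the reality and grading relations of the table in section~\ref{tb: J}, together with the fiberwise irreducibility of the Clifford action, force this endomorphism to vanish (in the appropriate dimensions). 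As written, your final step is a plausible heuristic but not an argument.
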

A.~Connes first proved the previous theorem~\ref{th: Co-Re}  under the additional condition that $\As$ is already given as the algebra of smooth complex-valued functions over a differentiable manifold $M$, namely $\As = C^\infty(M;\CC)$ 
(for a detailed proof see e.g.~\cite[Theorem~11.2]{FGV}), and 
conjectured~\cite{C4},~\cite[Theorem~6, Remark~(a)]{C10} the result for general commutative pre-C*-algebras $\As$.

A tentative proof of this last fact has been published by A.~Rennie~\cite{Re1}; some gaps were pointed out in the original argument, a different revised, but still incorrect, proof appears in~\cite{RV} (see also~\cite{RV2}) under some additional technical conditions. Recently A.~Connes~\cite{C17} finally provided the missing steps in the proof of the result. 

As a consequence, there exists a one-to-one correspondence between unitary equivalence classes of spectral triples and connected compact oriented Riemannian spin-manifolds up to spin-preserving isometric diffeomorphisms.

Similar results should also be available for spin$^c$ manifolds~\cite[Theorem~6, Remark~(e)]{C10}.

\subsection{Connes' Distance Formula}

Given a spectral triple $(\As,\H,D)$, let us denote by $\Ss(\As)$ and $\Ps(\As)$ the sets of \emph{states} and \emph{pure states} of the pre-C*-algebra $\As$, respectively. 
If $\As:=\text{C}^\infty(M;\CC)$, for all $p\in M$ we denote by $\ev_p: x\mapsto x(p)$ the ``evaluation functional'' in $p$ of the functions $x\in \As$ and note that $\ev_p\in \Ps(\As)$. Actually in this case $\Ps(\As)$ coincides with the set of all evaluation functionals. 

Going back to the general case, the \emph{Connes' distance} $d_D$ on $\Ps(\As)$ is the function on $\Ps(\As)\times\Ps(\As)$ given by 
\begin{equation*}
d_D(\omega_1,\omega_2):=\sup\{|\omega_1(x)-\omega_2(x)| \ | \ x \in \As, \ \|[D,\pi(x)]\|\leq 1\}.
\end{equation*}
Strictly speaking, without imposing other conditions, $d_D$ could also take the value $+\infty$ as in the case of non-connected manifolds.
In turn, one can use the same formula to define a ``distance'' on the set of all the states of $\As$.

\begin{theorem}[Connes's distance formula]~\cite[Proposition~9.12]{FGV} 
\label{th: Co-geo} 
If the spectral triple $(\As,\H,D)$ is obtained as in theorem~\ref{th: Co} from a compact finite-dimensional oriented Riemannian spin manifold $M$ equipped with a spinor bundle $S(M)$ and a spinorial charge conjugation $C_M$, then for every $p,q\in M$, 
$d_D(\ev_p,\ev_q)$ coincides with the geodesic distance
\begin{equation*}
d_M(p,q):=\inf\Big\{\int_a^b \|\gamma'(t)\|\, \text{d}t \ | \ \text{$\gamma$ is a geodesic with  $\gamma(a)=p$, $\gamma(b)=q$} \Big\}.
\end{equation*} 
\end{theorem}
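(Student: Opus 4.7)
The plan is to establish the two inequalities $d_D(\ev_p,\ev_q) \leq d_M(p,q)$ and $d_D(\ev_p,\ev_q) \geq d_M(p,q)$ separately, reducing Connes' spectral seminorm to the Lipschitz seminorm associated to $d_M$. The key ingredient is the identification of the commutator $[D_M,\pi_M(f)]$ with Clifford multiplication by $df$, which converts the spectral seminorm into the supremum norm of the gradient.

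For the upper bound I would first prove the identity
\begin{equation*}
[D_M,\pi_M(f)] = c(df), \qquad f \in \As_M = \text{C}^\infty(M;\CC),
\end{equation*}
where $c$ denotes the Clifford action. Since the Atiyah--Singer operator factorises on smooth sections as $D_M = c\circ\nabla^S$ and $\pi_M(f)$ acts as fibrewise multiplication, the Leibniz rule $\nabla^S(f\sigma) = df\otimes\sigma + f\nabla^S\sigma$ produces the commutator upon contracting with $c$. Pointwise $c(df)(p)$ is Clifford multiplication on $S_p(M)$ by the cotangent vector $df(p)\in T_p^*M$, and the defining relation of $\CCl^{(+)}(T_pM)$, together with the compatibility of the fibre inner products, yields $\|c(df)(p)\|_{\End(S_p(M))} = \|df(p)\|_{T_p^*M}$; hence
\begin{equation*}
\|[D_M,\pi_M(f)]\|_{\B(\H_M)} = \sup_{p\in M}\|df(p)\| = \|df\|_\infty.
\end{equation*}
The classical Riemannian identity $\|df\|_\infty = \sup_{p\neq q}|f(p)-f(q)|/d_M(p,q)$ (one direction from $f(q)-f(p) = \int_0^1 df(\gamma'(t))\,dt$ along a minimising geodesic, the other by differentiating along radial geodesics) then gives $d_D(\ev_p,\ev_q) \leq d_M(p,q)$ immediately.

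For the reverse inequality I would use the distance function $f_p(x) := d_M(x,p)$, which is globally $1$-Lipschitz and satisfies $|f_p(p)-f_p(q)| = d_M(p,q)$. Since $f_p$ is merely continuous, it is not directly admissible; the plan is to approximate it uniformly by smooth functions $f_n\in\text{C}^\infty(M;\CC)$ with $\|df_n\|_\infty\to 1$, by mollifying along the exponential map over a partition of unity subordinated to a finite cover of $M$ by geodesically convex normal charts, and then rescaling each $f_n$ by $1/\|df_n\|_\infty$ to land inside the admissible set. Evaluating at $p,q$ and passing to the limit yields $d_D(\ev_p,\ev_q)\geq d_M(p,q)$.

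The main obstacle is this smoothing step: on a general compact Riemannian manifold one must verify that mollification does not inflate the Lipschitz constant beyond $1+o(1)$. This can be handled either by a Greene--Wu type regularisation theorem for Lipschitz functions on manifolds, or more concretely by exploiting the fact that in sufficiently small normal neighbourhoods the geodesic distance agrees with the Euclidean distance of normal coordinates up to higher-order terms, so that a Euclidean mollification produces only a controlled perturbation of the Riemannian gradient.
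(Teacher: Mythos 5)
The paper itself offers no proof of this statement: it is quoted as a known result with the reference [FGV, Proposition~9.12], so there is no internal argument to compare with. Your proposal is, in substance, the standard proof found in that reference: identify $[D_M,\pi_M(f)]$ with Clifford multiplication $c(df)$ via the Leibniz rule for $\nabla^S$, convert the spectral seminorm into a Lipschitz-type seminorm to get $d_D(\ev_p,\ev_q)\le d_M(p,q)$, and recover the reverse inequality from (smoothings of) the distance function $d_M(\cdot,p)$; the mollification-in-normal-charts or Greene--Wu regularisation step you describe is exactly the standard way to make these trial functions admissible on a compact manifold. One inaccuracy to patch: the pointwise equality $\|c(df)(p)\|_{\End(S_p(M))}=\|df(p)\|$ holds for real-valued $f$, but for complex $f$ with $df=\alpha+i\beta$ one computes $\|c(df)(p)\|^2=\|\alpha\|^2+\|\beta\|^2+2\bigl(\|\alpha\|^2\|\beta\|^2-g(\alpha,\beta)^2\bigr)^{1/2}$, which can strictly exceed $\|\alpha\|^2+\|\beta\|^2$; so $\|[D_M,\pi_M(f)]\|$ is in general larger than the Lipschitz seminorm of $f$. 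This does not damage the argument: for the upper bound you only need the inequality $\|c(df)(p)\|\ge\sup_{\|v\|=1}|df_p(v)|$, so $\|[D_M,\pi_M(f)]\|\le 1$ still forces $f$ to be $1$-Lipschitz, and the lower bound uses only real-valued functions; alternatively, reduce at the outset to self-adjoint elements by replacing $f$ with $\mathrm{Re}(e^{i\theta}f)$ for a suitable phase, which does not increase $\|[D_M,\pi_M(f)]\|$ and leaves the supremum defining $d_D$ unchanged (this is how the cited reference handles it). Finally, since the statement defines $d_M$ as an infimum over geodesics, you should invoke connectedness and Hopf--Rinow to identify it with the path-length distance used in your Lipschitz identity, the disconnected case giving $+\infty$ on both sides, as the paper itself acknowledges.
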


Of course, given a unital $*$-morphism $\phi: \As_1\to \As_2$ there is a pull-back 
\hbox{$\phi^\bullet: \Ss(\As_2)\to \Ss(\As_1)$} 
defined by $\phi^\bullet(\omega):=\omega\circ\phi$ for all $\omega\in \Ss(\As_2)$.

\section{A Metric Category of Spectral Triples}

The objects of all of our categories will be compact spectral triples $(\As,\H,D)$, possibly with additional structure.  

Given two spectral triples $(\As_j,\H_j,D_j)$, with $j=1,2$, a \emph{metric morphism} of spectral triples 
$(\As_1,\H_1,D_1)\xrightarrow{\phi} (\As_2,\H_2,D_2)$ is by definition a unital epimorphism
$\phi:\As_1\to\As_2$ of pre-C*-algebras whose pull-back $\phi^\bullet:\Ps(\As_2)\to\Ps(\As_1)$ is an isometry
(note that if $\phi$ is an epimorphism, its pull-back $\phi^\bullet$ maps pure states into pure states), i.e.
\begin{equation*}
d_{D_1}(\phi^\bullet(\omega_1),\phi^\bullet(\omega_2))=d_{D_2}(\omega_1,\omega_2), \quad 
\forall \omega_1,\omega_2\in \Ps(\As_2).
\end{equation*}

Spectral triples with metric morphisms form a category $\Sf^m$. 

\begin{remark} \label{rm: eq}
A unitary equivalence of spectral triples gives an isomorphism in the metric category $\Sf^m$.
\end{remark}

\subsection{A Local Metric Category of Spectral Triples}\label{sec: sf}

For convenience of the reader, we recall here the definitions of morphisms of spectral triples proposed in our previous 
work~\cite[Sections~2.2-2.3]{BCL3}.

A \emph{morphism} in the category $\Sf$, between spectral triples $(\As_j,\H_j,D_j)$, $j=1,2$, is a pair $(\phi,\Phi)$, 
where $\phi: \As_1\to \As_2$ is a $*$-morphism between the pre-C*-algebras $\As_1,\As_2$ and $\Phi: \H_1\to \H_2$ is a bounded linear map in $\B(\H_1,\H_2)$ such that $\pi_2(\phi(x))\circ \Phi=\Phi\circ \pi_1(x)$, $\forall x \in \As_1$ and 
$D_2\circ\Phi(\xi)=\Phi \circ D_1(\xi)$ $\forall\xi\in \dom D_1$.

In a similar way, a \emph{morphism of real spectral triples} $(\As_j,\H_j,D_j,J_j)$ with $j=1,2$, in the category of real spectral triples $\Sf_r$, is a morphism in $\Sf$ such that $\Phi$ also satisfies $J_2\circ \Phi=\Phi \circ J_1$. 
Finally a \emph{morphism of even spectral triples} $(\As_j,\H_j,D_j,\Gamma_j)$ with $j=1,2$, in the category of even spectral triples 
$\Sf_e$, is a morphism in $\Sf$ such that $\Gamma_2\circ \Phi=\Phi \circ \Gamma_1$.
We will denote by $\Sf_I$ (respectively $\Sf_{Ir},\Sf_{Ire}$) the subcategory of $\Sf$ (respectively $\Sf_{r},\Sf_{re}$) consisting of ``isometric'' morphisms of spectral triples, i.e.~pairs $(\phi,\Phi)$ with $\phi$ surjective and $\Phi$ co-isometric. 

The reader can consult appendix~\ref{a: st} for a list of all the categories of spectral triples that are introduced and used in this paper. 

\section{The Metric Functor $\Cg$}

Let us consider the class $\Mf$ of C$^\infty$ metric isometries of compact finite-dimensional C$^\infty$ orientable Riemannian spin manifolds $M$ equipped with 
a fixed spinor bundle $S(M)$, a given spinorial charge conjugation $C_M$ and a volume form $\mu_M$. 
Note that in general a Riemannian isometry is not necessarily a  metric isometry.
The class $\Mf$ with the usual composition of functions forms a category. 
\begin{proposition}\label{prop: cr}
There is a contravariant functor $\Cg$ from the category $\Mf$ to the category $\Sf^m$ that to every triple $(M,S(M),C_M)\in \Mf$ associates the spectral triple $(\As,\H,D)\in \Sf^m$ given as in theorem~\ref{th: Co} and that to every smooth metric isometry 
$f: M_1\to M_2$ associates its pull-back $f^\bullet: \As_2\to\As_1$.  
\end{proposition}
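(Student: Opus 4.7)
The plan is to check that the assignment $f\mapsto f^\bullet$, with $f^\bullet(g):=g\circ f$, yields for every smooth metric isometry $f:M_1\to M_2$ in $\Mf$ a metric morphism $f^\bullet:(\As_{M_2},\H_{M_2},D_{M_2})\to(\As_{M_1},\H_{M_1},D_{M_1})$ in $\Sf^m$, and to verify contravariant functoriality. Concretely, I have to check that (i) $f^\bullet:\As_{M_2}\to\As_{M_1}$ is a well-defined unital surjective $*$-morphism of pre-C*-algebras, (ii) its pull-back $(f^\bullet)^\bullet$ sends pure states to pure states and is isometric for the two Connes distances, and (iii) $(\id_M)^\bullet=\id_{\As_M}$ and $(g\circ f)^\bullet=f^\bullet\circ g^\bullet$.

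The algebraic and functorial parts are immediate. Smoothness of $f$ guarantees that $g\circ f\in C^\infty(M_1;\CC)$ whenever $g\in C^\infty(M_2;\CC)$, and composition preserves pointwise sum, product, complex conjugation and the unit, so $f^\bullet$ is a unital $*$-morphism of pre-C*-algebras. The identity and composition rules $(\id_M)^\bullet=\id_{\As_M}$ and $(g\circ f)^\bullet=f^\bullet\circ g^\bullet$ are direct algebraic computations.

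For the metric condition (ii), I would use that the pure states of $\As_{M_i}=C^\infty(M_i;\CC)$ are exactly the evaluation functionals $\{\ev_p:p\in M_i\}$. A direct computation gives, for $p\in M_1$ and $g\in\As_{M_2}$,
\[
(f^\bullet)^\bullet(\ev_p)(g) \ = \ \ev_p(g\circ f) \ = \ g(f(p)) \ = \ \ev_{f(p)}(g),
\]
so $(f^\bullet)^\bullet(\ev_p)=\ev_{f(p)}\in\Ps(\As_{M_2})$; in particular pure states pull back to pure states. Combining Connes' distance formula (Theorem~\ref{th: Co-geo}), which identifies $d_{D_{M_i}}(\ev_p,\ev_q)$ with the Riemannian geodesic distance $d_{M_i}(p,q)$, with the metric isometry assumption on $f$, one obtains
\[
d_{D_{M_2}}((f^\bullet)^\bullet(\ev_p),(f^\bullet)^\bullet(\ev_q)) \ = \ d_{M_2}(f(p),f(q)) \ = \ d_{M_1}(p,q) \ = \ d_{D_{M_1}}(\ev_p,\ev_q),
\]
which is precisely the isometry property required of a metric morphism in $\Sf^m$.

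The main obstacle is the surjectivity of $f^\bullet$, required for it to be an epimorphism. The key point is that a smooth metric isometry $f:M_1\to M_2$ of compact Riemannian manifolds is automatically a smooth isometric embedding onto a closed submanifold $f(M_1)\subset M_2$: distance preservation forces injectivity, and a Myers--Steenrod type argument (any unit-speed minimizing geodesic $\gamma$ in $M_1$ is carried by $f$ to a curve whose endpoints at parameter difference $t$ lie at distance $t$, hence to a unit-speed geodesic in $M_2$) shows that the differential $df_p$ is norm-preserving on each tangent space and therefore, by polarization, a linear isometry; hence $f$ is an immersion, and by compactness of $M_1$ an embedding with closed image. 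A tubular neighborhood of $f(M_1)$ in $M_2$ together with a Whitney-type smooth extension then allows every $h\in\As_{M_1}$ to be written as $g\circ f$ for some $g\in\As_{M_2}$, giving the required surjectivity of $f^\bullet$.
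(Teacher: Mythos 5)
Your proposal is correct and follows essentially the same route as the paper: identify pure states with evaluation functionals, use Connes' distance formula (theorem~\ref{th: Co-geo}) together with the metric-isometry hypothesis for the isometry of $(f^\bullet)^\bullet$, obtain surjectivity of $f^\bullet$ from the fact that $f$ is a Riemannian isometric embedding onto a closed submanifold whose smooth functions extend to $M_2$, and check functoriality algebraically. The only difference is that you spell out the standard facts the paper merely asserts (the Myers--Steenrod-type argument that $df$ is a linear isometry, and the tubular-neighborhood extension of smooth functions), which is a harmless elaboration rather than a different method.
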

\begin{proof}
Every smooth metric isometry $f: M_1\to M_2$ in $\Mf$ is a Riemannian isometry of $M_1$ onto a closed embedded submanifold $f(M_1)$ of $M_2$. Since every smooth function on a  closed embedded submanifold is the restriction of a smooth function on $M_2$, the pull-back $\phi:=f^\bullet$ is a unital epimorphism of the pre-C*-algebras $\phi: \As_2\to \As_1$ and, by 
theorem~\ref{th: Co-geo}, $\phi^\bullet: \Ps(\As_1)\to\Ps(\As_2)$ is metric-preserving:
\begin{align*}
d_{D_2}(\phi^\bullet(\omega_1),\phi^\bullet(\omega_2))
&=d_{D_2}(\phi^\bullet(\ev_p),\phi^\bullet(\ev_q)) =d_{D_2}(\ev_{f(p)},\ev_{f(q)})\\
&=d_{M_2}(f(p),f(q))=d_{M_1}(p,q)=d_{D_1}(\ev_p,\ev_q)\\ &=d_{D_1}(\omega_1,\omega_2), 
\end{align*}
where $p,q\in M_1$ are the unique points such that $\omega_1=\ev_p$ and $\omega_2=\ev_q$. 

Of course $\Cg(g\circ f)=(g\circ f)^\bullet=f^\bullet\circ g^\bullet=\Cg_f\circ\Cg_g$ and 
$\Cg_{\iota_M}=\iota_{\Cg(M)}$.
\end{proof}

\begin{definition}
An \emph{Atiyah-Singer spectral triple} is a spectral triple $(\As,\H,D)$ that satisfies all the conditions in A.Connes' reconstruction theorem~\ref{th: Co-Re} i.e.~$(\As,\H,D)$ is irreducible commutative real (with real structure $J$ and grading $\Gamma$) strongly regular (in the sense of~\cite[Definition~6.1]{C17}) $m$-dimensional finite absolutely continuous orientable, with totally antisymmetric (in the last $m$ entries) Hochschild cycle, and satisfies Poincar\'e duality.
\end{definition}
By the reconstruction theorem~\ref{th: Co-Re} every Atiyah-Singer spectral triple is isomorphic to a spectral triple constructed, as in theorem~\ref{th: Co}, from an orientable connected compact spin Riemannian finite-dimensional differentiable manifold equipped with a given complex spinor bundle and a spinorial charge conjugation.  

\medskip 

We denote by AS-$\Sf^m$ the full subcategory of the metric category $\Sf^m$ whose objects are 
direct sums of 
Atiyah-Singer spectral triples. 
In a completely similar way, we denote by AS-$\Sf$ the full subcategory of the local metric category $\Sf$ whose objects are direct sums of 
Atiyah-Singer spectral triples in $\Sf$.
The functor $\Cg$ in proposition~\ref{prop: cr} takes actually values in the category AS-$\Sf^m$. 

\medskip

Here we present the main result of this paper. 

\begin{theorem}\label{th: co-re}
The metric functor $\Cg$ is an anti-equivalence between the categories $\Mf$ and AS-$\Sf^m$.
\end{theorem}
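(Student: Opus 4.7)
The plan is to establish the anti-equivalence in the standard way, by checking that $\Cg$ is essentially surjective, full and faithful.

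For essential surjectivity, given $(\As,\H,D)\in$~ab-$\Sf^m$, decompose it (by definition of the category) as a direct sum $\bigoplus_i(\As_i,\H_i,D_i)$ of irreducible Abelian spectral triples, necessarily finite in order that $\As$ be unital and $(D-\lambda)^{-1}$ compact. Apply Connes' reconstruction theorem~\ref{th: Co-Re} to each summand: $(\As_i,\H_i,D_i)$ is unitarily equivalent to $\Cg(M_i)$ for some connected compact Riemannian spin manifold $M_i$ with its canonical spinor bundle, charge conjugation and volume form. Setting $M:=\bigsqcup_iM_i$, one checks that $\Cg(M)\simeq\bigoplus_i\Cg(M_i)\simeq(\As,\H,D)$, since smooth functions, $L^2$-sections of the spinor bundle and the Dirac operator all split along components; Remark~\ref{rm: eq} upgrades this unitary equivalence to an isomorphism in $\Sf^m$.

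For fullness, consider a metric morphism $\phi:\Cg(M_1)\to\Cg(M_2)$. Extend $\phi$ by norm-continuity to a unital surjective $*$-homomorphism $\bar\phi:C(M_1;\CC)\to C(M_2;\CC)$ (surjectivity is preserved because the image of a $*$-homomorphism of C*-algebras is closed and contains the dense subalgebra $\As_{M_2}$); classical Gel'fand-Naĭmark duality then yields a unique continuous injection $f:M_2\to M_1$ with $\bar\phi(g)=g\circ f$. Since $\phi$ in fact sends $C^\infty(M_1;\CC)$ into $C^\infty(M_2;\CC)$, the composition $g\circ f$ is smooth on $M_2$ for every smooth $g$ on $M_1$, and localising to coordinate charts this implies that $f$ itself is $C^\infty$. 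Combining the isometric-pullback hypothesis on $\phi^\bullet$ with Connes' distance formula~\ref{th: Co-geo}, and observing that $d_D=+\infty$ between pure states supported on different connected components of a manifold, one obtains $d_{M_1}(f(p),f(q))=d_{M_2}(p,q)$ for all $p,q\in M_2$, with each component of $M_2$ forced into a single component of $M_1$. Hence $f\in\Mf$ and $\Cg(f)=f^\bullet=\phi$. Faithfulness is immediate: if $f^\bullet=g^\bullet$ for $f,g\in\Mf$, then $h\circ f=h\circ g$ for every smooth $h$ on $M_1$, and since smooth functions separate points of $M_2$ this forces $f=g$.

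The main technical subtlety is the recovery of $C^\infty$ regularity of $f$ from the purely algebraic/metric data, and this is handled cleanly thanks to the choice of smooth pre-C*-algebras as objects: the automatic-smoothness theorem of Myers-Steenrod for metric isometries of Riemannian manifolds need not be invoked, since the condition that $\phi$ preserves smooth functions already characterises the smooth structure. A secondary bookkeeping point is the disjoint-union-to-direct-sum correspondence used in both directions, which is enforced by the infinite value of the Connes distance between pure states of distinct summands.
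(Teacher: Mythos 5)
Your proposal is correct and follows essentially the same route as the paper: faithfulness and fullness via Gel'fand duality together with preservation of smooth functions and Connes' distance formula, and essential surjectivity by applying the reconstruction theorem componentwise and matching disjoint unions with direct sums. The only difference is that you spell out a few details the paper leaves implicit (finiteness of the irreducible decomposition, the chart argument for smoothness of $f$), which is a welcome but not substantively different elaboration.
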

\begin{proof}
The functor $\Cg$ is faithful: if $\Cg_f=\Cg_g$ for two smooth isometries $f,g: M_1\to M_2$, then $f^\bullet=g^\bullet$ as morphisms of pre-C*-algebras and hence they coincide also when uniquely extend to morphisms of C*-algebras of continuous functions and the result $f=g$ follows from Gel'fand duality theorem. 

The functor $\Cg$ is full: if $\phi: \Cg(M_2)\to\Cg(M_1)$ is a metric morphism in $\Sf^m$, as a homomorphisms of pre-C* algebras of smooth functions, $\phi$ extends uniquely to a morphism of C*-algebras of continuous functions and, from Gel'fand duality theorem, there exists a unique continuous function $f: M_1\to M_2$ such that $f^\bullet=\phi$. 
From the fact that $f^\bullet$ maps smooth functions on $M_2$ to smooth functions on $M_1$ it follows that $f$ is a smooth function between manifolds.
Since $\phi$ also preserves the spectral distances, it follows that $f$ is a smooth metric isometry hence a Riemannian isometry. 

The functor $\Cg$ is representative: for when restricted to the subcategory of connected manifolds with target the subcategory of irreducible spectral triples, this is actually a restatement of the reconstruction theorem~\ref{th: Co-Re} and remark~\ref{rm: eq}. 
Since the functor $\Cg$ maps disjoint unions of connected components into direct sums of spectral triples, the result follows.
\end{proof}

Unfortunately, at this stage, we cannot present a statement involving the category of all Abelian spectral triples. The above result raises naturally the issue of decomposing (Abelian) spectral triples in terms of irreducible components.

\begin{remark}\label{rk: co-re}
In restriction to the subcategory $\Mf_d$ of \emph{dimension-preserving} smooth isometries (i.e.~isometric immersions with fiberwise isomorphic tangent maps), the metric functor $\Cg$ is an anti-equivalence between $\Mf_d$ and the subcategory 
AS-$\Sf^m_d$ of metric morphisms of 
direct sums of irreducible 
Abelian spectral triples with the same dimension. 
In a similar way, denoting by $\Is(\Cf)$ 
the groupoid of isomorphisms of $\Cf$, we have that $\Cg|_{\Is(\Mf)}$ is an anti-equivalence between $\Is(\Mf)$ and the 
groupoid 
$\Is(\text{AS-}\Sf^m)$.
The groupoid of isomorphisms of 
$\Mf$ (always a subcategory of $\Mf_d$) is actually the ``disjoint union'' of denumerable ``connected components'' consisting of the categories of smooth bijective isometries of \hbox{$n$-dimensional} spin-manifolds.
\end{remark}

\section{Metric and Spin Categories}

We now proceed to establish a connection between the category $\Sf^m$ of metric spectral triples and the categories of spectral triples $\Sf$ (respectively real spectral triples $\Sf_r$) introduced in~\cite[Section~2.2-2.3]{BCL3} and briefly recalled in 
section~\ref{sec: sf}. 

Denote by $\Sf^0$ (respectively $\Sf^0_{Ired}$) the category of spectral triples whose morphisms are those homomorphisms of algebras $\phi$ for which there exists at least one $\Phi$ such that the pair $(\phi,\Phi)$ is a morphism in $\Sf$ (respectively 
$\Sf_{Ired}$).\footnote{Please refer to appendix~\ref{a: st} for the list of the categories of spectral triples.}
We have a ``forgetful'' full functor $\Fg:\Sf\to \Sf^0$ that to every morphism $(\phi,\Phi)$ in $\Sf$ associates $\phi$ as a morphism in $\Sf^0$. 

\begin{lemma}\label{lem: smooth}
A metric isometry of Riemannian manifolds with the same dimension is a smooth Riemannian isometry onto a union of connected components.
\end{lemma}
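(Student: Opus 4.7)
The plan is to invoke the Myers-Steenrod theorem to upgrade ``metric isometry'' to ``smooth Riemannian isometry'', and then use a local-diffeomorphism argument combined with compactness to identify the image as a union of connected components of $M_2$.

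First I would observe that any distance-preserving map $f:M_1\to M_2$ is automatically injective and continuous. The key step is to conclude that $f$ is smooth, with differential $df_p:T_pM_1\to T_{f(p)}M_2$ a linear isometry at every $p\in M_1$; this is the content of the classical Myers-Steenrod theorem. The argument (which I would simply cite) works in normal coordinates centred at $p$ and $f(p)$: on a small ball, geodesic distance equals the Euclidean norm of the exponential coordinate, so $\exp_{f(p)}^{-1}\circ f\circ\exp_p$ preserves Euclidean distance on a neighbourhood of the origin, and a Mazur-Ulam-type argument then forces it to be the restriction of a linear isometry.

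Since $\dim M_1=\dim M_2$, each $df_p$ is a linear isomorphism, so by the inverse function theorem $f$ is a local diffeomorphism; in particular $f(M_1)$ is open in $M_2$. Compactness of $M_1$ (built into the present compact-spectral-triple setting) then forces $f(M_1)$ to be compact, hence closed in $M_2$. A subset of $M_2$ which is both open and closed is a union of connected components, and on its image $f$ becomes a smooth injective local isometry between Riemannian manifolds of the same dimension, hence a Riemannian isometry.

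The main obstacle is the Myers-Steenrod step itself: extracting smoothness and infinitesimal linearity of $f$ from purely metric information. Once that is granted, the remaining argument reduces to a standard dimension count plus the clopen-subset observation, and no further input is needed.
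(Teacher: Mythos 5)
Your overall skeleton (Myers--Steenrod plus a clopen-image argument using compactness) is close to the paper's, but there is a genuine gap in the order in which you deploy it. You invoke the ``classical Myers--Steenrod theorem'' for $f\colon M_1\to M_2$ before knowing anything about its image, whereas the classical statement (the one the paper cites from Petersen) concerns \emph{surjective} distance-preserving maps; at this stage $f$ is only a distance-preserving injection, so the citation does not apply. Your sketched justification does not repair this: in normal coordinates the geodesic distance equals the Euclidean norm of the coordinate only \emph{radially} from the centre, not between two arbitrary nearby points, so $\exp_{f(p)}^{-1}\circ f\circ \exp_p$ is not known to preserve Euclidean distances on a neighbourhood of $0$, and a Mazur--Ulam-type argument cannot be started. (A strengthening of Myers--Steenrod for non-surjective distance-preserving maps---smoothness, with linear isometric differential and totally geodesic image---is in fact true, but it is a genuinely stronger statement requiring a first-variation/angle-comparison argument, not the one you describe.)

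The paper closes exactly this gap by reversing your two steps. Since $f$ is continuous and injective and $\dim M_1=\dim M_2$, Brouwer's invariance of domain already gives that $f$ is an open map; compactness of each component of $M_1$ then makes the image of each component clopen, hence a full connected component of $M_2$. Because components are clopen, on each of them the restriction of the ambient distance coincides with the intrinsic Riemannian distance, so $f$ restricted to a component is a \emph{surjective} metric isometry onto a component, and only at that point is the classical Myers--Steenrod theorem applied, yielding a smooth Riemannian isometry onto a union of connected components. If you either adopt this ordering, or replace your citation by the non-surjective version of Myers--Steenrod with a correct proof, the rest of your argument (local diffeomorphism, open plus closed image, clopen $=$ union of components) goes through; your explicit appeal to compactness of $M_1$ is indeed essential, as the inclusion of the open unit ball into $\RR^2$ shows that the statement fails without it.
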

\begin{proof}
Let $f: M \to N$ be a metric isometry.  
Since $\dim M=\dim N$, by Brouwer's theorem, we see that $f$ is open and maps each connected component of $M$ onto a unique connected component of $N$. 
By the Myers-Steenrod theorem (see for example~\cite[Section~5.9, Theorem~9.1]{P}), any such bijective map between connected components is a smooth Riemannian surjective isometry; hence $f: M\to N$ is a smooth Riemannian isometry 
onto $f(M)$, a union of connected components of $N$. 
\end{proof}

Let $f: (M,S(M),C_M)\to (N,S(N),C_N)$ be a morphism in $\Mf_d$. 
Thanks to the last lemma, we can consider the differential $Df:T(M)\to T(N)$. 
It is a monomorphism of Euclidean bundles and induces a unique Bogoljubov morphism 
$\CCl_{Df}: \CCl^{(+)}(T(M))\to \CCl^{(+)}(T(N))$ of the Clifford bundles that is actually an isomorphism of $\CCl^{(+)}(T(M))$ with subbundle  $\CCl^{(+)}(T(f(M)))$, the Clifford bundle of the submanifold 
$f(M)$. (From this we see that the subalgebra $\CCl^{(+)}(f(M))\subset \CCl^{(+)}(N)$ of sections of the Clifford bundle of $N$ with support in $f(M)$ is naturally isomorphic with the algebra $\CCl^{(+)}(M)$ of sections of the Clifford bundle of $M$. 
Since the restriction to $f(M)$ is a natural epimorphism $\rho: \CCl^{(+)}(N)\to \CCl^{(+)}(f(M))$, ($\rho$ acts on Clifford fields by multiplication with the characteristic function of $f(M)$), there is a natural unital epimorphism of algebras $\psi: \CCl^{(+)}(N)\to \CCl^{(+)}(M)$ that becomes an isomorphism when restricted to $\CCl^{(+)}(f(M))$.) This isomorphism can be used to ``transfer'' the irreducible Clifford action of $\CCl^{(+)}(T(f(M)))$ on the bundle $S(f(M)):=S(N)|_{f(M)}$ to an irreducible action of $\CCl^{(+)}(T(M))$ and, since the bundle $f^\bullet(S(N))=f^\bullet(S(f(M)))$ is naturally isomorphic to $S(f(M))$, the bundle 
$f^\bullet(S(N))$ becomes an irreducible complex spinor bundle on $M$. 
By a similar argument, $f^\bullet(S(N))$ comes equipped with a spinorial charge conjugation 
$f^\bullet(C_N)$ obtained by ``pull-back'' of (the restriction to $S(f(M))$ of) $C_N$ through the isomorphism 
$f^\bullet(S(N))\simeq S(f(M))$. 

\medskip

We say that $f$ is \emph{spin-preserving} if the spin structure $[(f^\bullet(S(N)),f^\bullet(C_N))]$ determined by $f^\bullet(S(N))$ with spinorial charge conjugation $f^\bullet(C_N)$ coincides with the spin structure of $M$ i.e.~if there exists an isomorphism of Hermitian bundles $U: f^\bullet(S(N))\to S(M)$ that intertwines the charge conjugations: $ U\circ f^\bullet(C_N)= C_M\circ U$ and the Clifford actions. 
Note that if $f$ is orientation-preserving, the isomorphism $U$ also intertwines the grading operators of the spinor bundles.

Let us denote by $\Mf_d$-spin the subcategory of spin and orientation-preserving maps in $\Mf_d$. 

The following result, that we report for completeness, is certainly well-known although we could not find any suitable reference.  
Note that AS-$\Sf_{Ired}$ denotes the full subcategory of $\Sf_{Ired}$ whose objects are  direct sums of irreducible Abelian spectral triples. 

\begin{proposition}\label{prop: iso-R}
Let $M, N$ be two compact orientable Riemannian spin-manifolds in the category $\Mf$.  
If $f:M \to N$ is a spin-preserving isomorphism of Riemannian manifolds, the spectral triples $(\As_M,\H_M,D_M)$ and 
$(\As_N,\H_N,D_N)$ are isomorphic in the category \hbox{AS-$\Sf_{Ired}$}. 
\end{proposition}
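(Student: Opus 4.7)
The plan is to construct an explicit isomorphism $(\phi,\Phi)$ in ab-$\Sf_{Ired}$ out of the data $(f,U)$ supplied by the hypothesis. First I would set $\phi := f^\bullet : \As_N \to \As_M$, which is an isomorphism of pre-C*-algebras because $f$ is a diffeomorphism. Then, using the Hermitian bundle isomorphism $U : f^\bullet(S(N)) \to S(M)$ provided by the spin-preserving clause, define
\[
(\Phi\tau)(p) := U_p\bigl(\tau(f(p))\bigr), \qquad p\in M,\ \tau \in \Gamma^\infty(N,S(N)).
\]
Because $f$ is a Riemannian isometry we have $f^*\mu_N = \mu_M$, and the fiberwise unitarity of $U$ makes $\Phi$ inner-product preserving, so it extends by density to a unitary $\Phi : \H_N \to \H_M$.

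Next I would verify the categorical intertwining conditions one at a time. The identity $\Phi \circ \pi_N(a) = \pi_M(\phi(a)) \circ \Phi$ for $a\in\As_N$ is an immediate pointwise computation, using only linearity of $U$ on fibers. The condition $\Phi \circ J_N = J_M \circ \Phi$ is literally the charge-conjugation clause $U\circ f^\bullet(C_N) = C_M\circ U$ of the spin-preserving hypothesis. Orientation-preservation, understood as part of being a spin-preserving isomorphism in the sense of $\Mf_d$-spin (as the paragraph preceding the statement points out), forces $U$ to intertwine the Clifford chirality elements, and hence $\Phi\circ\Gamma_N = \Gamma_M\circ\Phi$.

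The substantive step is the intertwining of Dirac operators $\Phi \circ D_N = D_M \circ \Phi$ on the dense core $\Gamma^\infty(N, S(N))$. Since $f$ is a Riemannian isometry, its differential transports the Levi-Civita connection of $N$ onto that of $M$; via the Bogoljubov isomorphism $\CCl_{Df}$ of the Clifford bundles (intertwined with $U$ on spinors by the Clifford-action compatibility of $U$), the spin connection $\nabla^{S(N)}$ is transported to a connection on $(S(M),C_M)$ which is Hermitian-compatible, Clifford-compatible, and a lift of Levi-Civita. By uniqueness of the spin connection on a fixed spin bundle this transported connection must coincide with $\nabla^{S(M)}$, and contracting with Clifford multiplication then yields $\Phi D_N = D_M \Phi$. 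The inverse morphism is constructed symmetrically from $f^{-1}$ and $U^{-1}$, giving the desired isomorphism.

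The main obstacle is precisely this last naturality step: one must trace the Atiyah-Singer construction (Levi-Civita connection to spin connection to Dirac operator) through the pull-back by $f$ and the bundle identification $U$. All the ingredients ultimately reduce to uniqueness of the spin connection associated to a given Clifford structure and compatible Hermitian metric on the spinor bundle; granting this, the verification is a bookkeeping exercise, which is why the result is ``well-known'' but nevertheless demands some care in the writing.
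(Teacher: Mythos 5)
Your construction is essentially the paper's own proof: the paper takes $\phi=f^\bullet$ and factors your map $\Phi$ as $\Theta_U\circ\Psi$, where $\Psi\colon\sigma\mapsto\sigma\circ f$ is the unitary pull-back onto $\mathrm{L}^2(M,f^\bullet(S(N)))$ and $\Theta_U$ is the unitary induced by the bundle isomorphism $U$, which is exactly your $(\Phi\tau)(p)=U_p(\tau(f(p)))$. The only difference is that the paper leaves the intertwining of $D$, $J$, $\Gamma$ as ``not difficult to check,'' whereas you spell it out via transport of the Levi-Civita/spin connection and its uniqueness (where, to be fully precise, the uniqueness you invoke also uses compatibility with the charge conjugation, which your transported connection does satisfy since $U$ intertwines $f^\bullet(C_N)$ with $C_M$).
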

\begin{proof}
The pull-back $\phi:=f^\bullet$ is a $*$-isomorphism $\phi: \As_N\to \As_M$ of pre-C*-algebras. 

\medskip

Consider the ``pull-back of spinor fields'' given by the invertible map $\Psi:= \sigma \mapsto \sigma \circ f$, for all $\sigma\in\H_N$.
Since $f$ is an orientation-preserving Riemannian isometry, it leaves invariant the volume forms $f^\bullet(\mu_N)=\mu_M$ and so 
we obtain 
\begin{equation*}
\int_M \ip{\Psi(\sigma)(x)}{\Psi(\tau)(x)}\, d\mu_M(x)=\int_N\ip{\sigma(y)}{\tau(y)}\, d\mu_N(y)
\end{equation*}
that implies that the map $\Psi: \H_N \to \text{L}^2(M,f^\bullet(S(N)))=:\H^\bullet$ is a unitary operator.

Since $f^\bullet(S(N))$ is a Hermitian bundle over $M$, $\H^\bullet$ carries a natural representation $\pi^\bullet$ of the algebra 
$\As_M$ given by pointwise multiplication. $\Psi$ intertwines $\pi_N$ and $\pi^\bullet\circ\phi$, 
i.e.~$\Psi(\pi_N(a)\sigma)=\pi^\bullet(\phi(a))\Psi(\sigma)$ for $a\in\As_N$ and $\sigma\in\H_N$.

\medskip

Let $U: f^\bullet(S(N))\to S(M)$ be a (noncanonical) isomorphism of Hermitian bundles induced by the spin-preserving condition on 
$f$. 
Since we know that $U$ is unitary on the fibers, we have 
$\int_M\ip{U\sigma(p)}{U\tau(p)}_{S_p(M)}\,\text{d}\mu_M(p)=
\int_M\ip{\sigma(p)}{\tau(p)}_{f^\bullet_p(S(N))}\,\text{d}\mu_M(p)$, for all 
\hbox{$\sigma,\tau\in \Gamma^\infty(f^\bullet(S(N)))$}. Hence $U$ uniquely extends to a unitary map 
$\Theta_U:\H^\bullet\to \H_M$. Note that $\Theta_U$ is $\As_M$-linear: 
$\Theta_U(a\cdot \sigma)=a\cdot \Theta_U(\sigma)$, for $a\in \As_M$ and $\sigma\in \H^\bullet$.

\medskip

Now it is not difficult to check that the pair $(\phi,\Theta_U\circ \Psi)$ is an isomorphism in the category $\Sf_{Ired}$ from the spectral triple $(\As_N,\H_N,D_N)$ to $(\As_M,\H_M,D_M)$. 
\end{proof}

\begin{lemma}\label{lem: full}
If $M$ and $N$ are two orientable compact Riemannian spin-manifolds in the category $\Mf$ and $(u,U)$ is an isomorphism from  $(\As_N,\H_N,D_N)$ to $(\As_M,\H_M,D_M)$ in the category AS-$\Sf_{Ire}$, then there is a spin-preserving orientation-preserving Riemannian isometry (metric isometry) $f: M\to N$ such that $f^\bullet=u$. 
\end{lemma}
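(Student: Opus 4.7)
The plan is to recover a smooth map $f$ from $u$ via Gel'fand duality, to show that $f$ is a Riemannian isometry by combining Connes' distance formula with the unitarity of $U$, and finally to descend $U$ to a bundle isomorphism of spinors that implements the spin- and orientation-preserving properties.

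First, since $u:\As_N\to\As_M$ is a unital $*$-isomorphism of pre-C*-algebras, it extends uniquely to a unital $*$-isomorphism of the uniform closures $C(N)\to C(M)$, which by the commutative Gel'fand-Na\u\i mark theorem is of the form $f^\bullet$ for a unique homeomorphism $f:M\to N$. The hypothesis that both $u$ and $u^{-1}$ preserve smooth functions implies that $f$ and $f^{-1}$ are smooth, so $f$ is a diffeomorphism; note that $\ev_p\circ u=\ev_{f(p)}$ for every $p\in M$.

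Second, because $(u,U)$ is an isomorphism in $\Sf_{Ire}$, the operator $U$ is unitary, $\pi_M(u(a))U=U\pi_N(a)$, and $D_M U=U D_N$. Hence, for every $a\in\As_N$,
\begin{equation*}
\|[D_M,\pi_M(u(a))]\|=\|U[D_N,\pi_N(a)]U^{-1}\|=\|[D_N,\pi_N(a)]\|.
\end{equation*}
Applying Connes' distance formula (theorem~\ref{th: Co-geo}) to both spectral triples and using $\ev_p\circ u=\ev_{f(p)}$ one concludes $d_M(p,q)=d_N(f(p),f(q))$, so $f$ is a metric isometry; by lemma~\ref{lem: smooth} it is then a smooth Riemannian isometry of $M$ onto $N$.

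Third, since $U$ intertwines $D_N$ and $D_M$, it maps the smooth common domain $\Gamma^\infty(N,S(N))=\cap_k\dom D_N^k$ bijectively onto $\Gamma^\infty(M,S(M))$, and this restriction is an isomorphism of finite projective modules over $\As_M\simeq\As_N$ (the identification via $u$). By the Serre-Swan theorem it is implemented by a fibrewise unitary Hermitian bundle morphism $V:f^\bullet(S(N))\to S(M)$. The equality $U[D_N,\pi_N(a)]U^{-1}=[D_M,\pi_M(u(a))]$, read at the level of principal symbols, shows that $V$ intertwines the Clifford multiplications through the Bogoljubov map induced by $df$; analogously $UJ_N=J_M U$ and $U\Gamma_N=\Gamma_M U$ descend via $V$ to $V\circ f^\bullet(C_N)=C_M\circ V$ and to the corresponding intertwining of chirality sections, yielding the spin-preserving and orientation-preserving conditions on $f$, respectively.

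The main obstacle I expect is the last step: extracting the bundle isomorphism $V$ from $U$ in a way compatible with $f$ (a careful application of Serre-Swan to the finitely generated projective modules of smooth spinors) and, more delicately, verifying that the intertwining of the commutators $[D,\pi(a)]$ transports the Clifford structure through $V$, which is what actually captures the geometric content of the spin-preserving condition as formulated in the excerpt preceding proposition~\ref{prop: iso-R}.
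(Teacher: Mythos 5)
Your proposal is correct, and the interesting point is that its middle step follows a genuinely different route from the paper's. The Gel'fand/smoothness step is identical. To prove that $f$ is a Riemannian isometry you transport the Lipschitz seminorm through the unitary $U$ (getting $\|[D_M,\pi_M(u(a))]\|=\|[D_N,\pi_N(a)]\|$), apply Connes' distance formula of theorem~\ref{th: Co-geo} to both triples together with $\ev_p\circ u=\ev_{f(p)}$, and then invoke Myers--Steenrod through lemma~\ref{lem: smooth}; the paper never mentions distances at this point: it observes that $\Ad_U$ is a filtered isomorphism $\Omega_{D_N}(\As_N)\to\Omega_{D_M}(\As_M)$ extending $f^\bullet$, identifies the degree-one parts with the Hermitian modules of sections of the (complexified) tangent bundles, and deduces from Serre--Swan that $Df\colon T(M)\to T(N)$ is an isomorphism of Euclidean bundles, whence the isometry. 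Your route is more elementary, using only facts already quoted in the paper (the distance formula and lemma~\ref{lem: smooth}); the paper's route pays off by producing the bundle-level datum $Df$ directly, which is what its terse final sentence ($\Ad_U(J_N)=J_M$ and $\Ad_U(\Gamma_N)=\Gamma_M$ imply spin- and orientation-preservation) implicitly relies on. For that last claim your argument --- restrict $U$ to $\cap_k\dom D^k=\Gamma^\infty$, apply smooth Serre--Swan to get a bundle map $V\colon f^\bullet(S(N))\to S(M)$, read the intertwining of the commutators $[D,\pi(a)]$ (principal symbols, i.e.\ Clifford multiplication by differentials) to transport the Clifford action along the Bogoljubov map of $df$, and descend $UJ_N=J_MU$, $U\Gamma_N=\Gamma_MU$ to $V$ --- is a legitimate and in fact more explicit filling-in of exactly the content the paper compresses into one line (the fibrewise unitarity of $V$ needs the compatibility of the $L^2$ inner products with the volume preservation of the isometry $f$, the same computation as in proposition~\ref{prop: iso-R} run backwards); the delicacy you flag at the end is precisely the gap the paper leaves implicit, so nothing essential is missing from your argument relative to the paper's.
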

\begin{proof}
The map $u: \As_N\to\As_M$ naturally extends to a $*$-isomorphisms of C*-algebras and by Gel'fand theorem there exists a homeomorphism $f: M \to N$ such that $f^\bullet=u$. Since $f^\bullet$ maps smooth functions onto smooth functions, $f$ is a diffeomorphism. 

The filtered algebra $\Omega_M(\As_M)$ (respectively $\Omega_N(\As_N)$) coincides with the filtered algebra of smooth sections of the Clifford bundle $\CCl^{(+)}(T(M))$ (respectively $\CCl^{(+)}(T(N))$) and the 
map $\Ad_U: \Omega_{D_N}(\As_N)\to\Omega_{D_M}(\As_M)$ is a filtered isomorphisms (extending $f^\bullet$). 
Therefore its restricition $\Ad_U: \Omega^1_{D_N}(\As_N)\to\Omega^1_{D_M}(\As_M)$ is an isomorphism between 
the Hermitian modules of sections of the complexification of the tangent bundles $T(M)$ and $T(N)$. 

From Serre-Swan theorem, $Df: T(M)\to T(N)$ is an isomorphism of Euclidean bundles which implies that $f$ is a Riemannian isometry. 

Since $\Ad_U(J_N)=J_M$ and $\Ad_U(\Gamma_N)=\Gamma_M$, $f$ is orientation- and spin-preserving.
\end{proof}

We can now state the promised equivalence result. 
\begin{theorem}\label{th: eq}
The functor $\Cg$ is an equivalence between the category \hbox{$\Mf_d$-spin} and the category AS-$\Sf^0_{Ired}$.
\end{theorem}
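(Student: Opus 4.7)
The plan is to verify the three standard conditions for the contravariant functor $\Cg$ restricted to $\Mf_d$-spin and landing in ab-$\Sf^0_{Ired}$: faithfulness, fullness, and essential surjectivity on objects. Two of these are essentially already done by results accumulated in the paper; the work is really concentrated in the fullness argument. Faithfulness is immediate from Proposition~\ref{th: emb}, which asserts that $\Cg|_{\Mf_d\text{-spin}}$ is an embedding into ab-$\Sf^0_{Ired}$; in particular $\Cg_f=\Cg_g$ forces $f=g$ on the underlying manifolds by Gel'fand duality, exactly as in the proof of Theorem~\ref{th: co-re}.

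For essential surjectivity, I would take an arbitrary object of ab-$\Sf^0_{Ired}$ and, using the definition of the class as direct sums of irreducible Abelian spectral triples, decompose it as $\bigoplus_i (\As_i,\H_i,D_i)$. Each summand is a commutative, irreducible, real, orientable, regular, absolutely continuous, finite, $m_i$-dimensional spectral triple satisfying Poincar\'e duality, so the reconstruction theorem~\ref{th: Co-Re} produces a connected compact orientable spin Riemannian manifold $M_i$ with $(\As_i,\H_i,D_i)\simeq \Cg(M_i)$ in $\Sf_{Ired}$, hence also in $\Sf^0_{Ired}$. Setting $M:=\bigsqcup_i M_i\in \Mf$, the triple $\Cg(M)=\bigoplus_i\Cg(M_i)$ is isomorphic to the starting object in ab-$\Sf^0_{Ired}$.

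The main step is fullness. Given $\phi:\Cg(N)\to\Cg(M)$ in ab-$\Sf^0_{Ired}$, by definition there exists $\Phi:\H_N\to\H_M$ with $(\phi,\Phi)\in\Sf_{Ired}$, i.e.~$\phi$ is a surjective $*$-morphism, $\Phi$ is co-isometric, and the pair intertwines representations, Dirac operators, real structures and gradings. I would proceed in two moves. First, using (the morphism-level generalization of) Proposition~\ref{pr: in}, $\phi$ sits inside ab-$\Sf^m_d$, so Remark~\ref{rk: co-re} produces a \emph{dimension-preserving smooth metric isometry} $f:M\to N$ in $\Mf_d$ with $\Cg(f)=\phi$; alternatively, the same conclusion follows directly by combining lemma~\ref{lem: smooth} with Gel'fand duality once we know $\phi$ is metric. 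Second, I have to upgrade $f$ to a spin-preserving, orientation-preserving morphism. For this I would adapt the argument of Lemma~\ref{lem: full}: the filtered algebra $\Ad_\Phi$ carries $\Omega^1_{D_N}(\As_N)$ into $\Omega^1_{D_M}(\As_M)$ compatibly with the module actions, so by Serre-Swan the differential $Df$ pulls back the Clifford bundle of (a union of connected components of) $N$ to that of $M$ as Euclidean bundles, and the relations $J_M\Phi=\Phi J_N$ and $\Gamma_M\Phi=\Phi\Gamma_N$ give a bundle isomorphism $U:f^\bullet(S(N))\to S(M)$ intertwining charge conjugations, Clifford actions and (in the even case) gradings. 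This is precisely the definition of spin-preserving and orientation-preserving, so $f\in \Mf_d$-spin and we are done.

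The hard part is exactly this second move in the fullness argument: translating the operator-level intertwining by a merely co-isometric $\Phi$ into bundle-level compatibility of spinorial structures on $M$ with those pulled back from $N$. In the isomorphism case treated in Lemma~\ref{lem: full} the operator $U$ arises immediately from a unitary, while here $\Phi$ factors through the projection onto the components of $N$ hit by $f$ (cf.~the proof of Proposition~\ref{th: emb}), and one must be careful that the charge conjugation and grading on $f^\bullet(S(N))$ that one reads off from $J_N,\Gamma_N$ via $\Phi$ genuinely coincide with the restrictions of $J_N,\Gamma_N$ to the invariant subspace $\H_{f(M)}$. Once this bookkeeping is carried out on each connected component of $f(M)$, everything assembles on disjoint unions and the equivalence follows.
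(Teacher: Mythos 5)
Your proposal is correct and follows the paper's strategy almost step for step: faithfulness from Proposition~\ref{th: emb}, essential surjectivity via the reconstruction theorem~\ref{th: Co-Re} applied componentwise to direct sums of irreducible Abelian triples (this is exactly what the paper delegates to Proposition~\ref{pr: in}), and fullness by first using Proposition~\ref{pr: in} together with Remark~\ref{rk: co-re} to obtain a metric isometry $f\colon M\to N$ in $\Mf_d$ with $\Cg(f)=\phi$. The only genuine divergence is the last step, the upgrade of $f$ to a spin- and orientation-preserving map: you propose to re-run the argument of Lemma~\ref{lem: full} directly with the merely co-isometric $\Phi$, and you rightly flag the ensuing bookkeeping (initial space of $\Phi$, identification of the conjugation and grading induced on $f^\bullet(S(N))$ with the restrictions of $J_N$, $\Gamma_N$ to $\H_{f(M)}$) as the delicate point. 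The paper sidesteps this entirely: since $f(M)$ is a union of connected components of $N$ (Lemma~\ref{lem: smooth}), $\Cg(N)$ splits as a direct sum and $\phi$ already defines an isomorphism from $\Cg(f(M))$ to $\Cg(M)$ in $\Sf^0_{Ired}$, so Lemma~\ref{lem: full} applies verbatim to conclude that $f\colon M\to f(M)$ is orientation- and spin-preserving. Your route can be made to work, but the reduction to the isomorphism case on $f(M)$ renders your ``hard part'' unnecessary; if you keep your version, you should at least spell out why the kernel of $\Phi$ is exactly $\H_{N\setminus f(M)}$ (using irreducibility of the components), which is the one point your sketch leaves implicit.
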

\begin{proof}
First, we show that the functor $\Cg$ is an embedding of the category $\Mf_d$-spin into AS-$\Sf^0_{Ired}$, and thus it is faithful.

\medskip

Let $f: M \to N$ be a spin-preserving metric isometry in $\Mf_d$-spin.  
By Lemma~\ref{lem: smooth} \hbox{$f: M\to N$} is a smooth Riemannian isometry onto the closed submanifold $f(M)$, a union of connected components of $N$. 

\medskip

We denote by $\rho: \As_N\to \As_{f(M)}$ the restriction epimorphism. 

\medskip

The Hilbert space $\H_N=\text{L}^2(N,S(N))$ decomposes as the direct sum $\oplus_{j\in \pi^0(N)}\H_j$ of Hilbert spaces (one for each connected component $j\in\pi^0(N)$ of $N$) and the multiplication operator $P$ by the characteristic function $\chi_{f(M)}$ is the projection operator onto the subspace $\H_{f(M)}:=P(\H_N)=\oplus_{j\in\pi^0(f(M))}\H_j$ (cf.~\cite[Page~491]{FGV}).Note that, since the Dirac operator $D_N$ is ``local'' (i.e.~it preserves the support of the spinor fields), the subspace $\H_{f(M)}$ is invariant for $D_N$. In the same way, since $J_N$ and $\Gamma_N$ acts fiberwise, $\H_{f(M)}$ is invariant for the charge conjugation and grading of $N$.   

\medskip

Defining $D_{f(M)}:=P\circ D_N\circ P$, $J_{f(M)}:=P\circ J_N\circ P$ and $\Gamma_{f(M)}:=P\circ \Gamma_N\circ P$, 
it is immediate that $(\As_{f(M)},\H_{f(M)},D_{f(M)})$ is a real (even) spectral triple and it follows that the ``restriction'' map $P: \H_N\to\H_{f(M)}$ satisfies $\forall a\in \As_N$, $\sigma\in \H_N$, $P(a\sigma)=\rho(a)P(\sigma)$, $P\circ D_N=D_{f(M)}\circ P$, $P\circ J_N=J_{f(M)}\circ P$, $P\circ \Gamma_N=\Gamma_{f(M)}\circ P$. 
This means that the pair $(\rho,P)$ is a morphism in the category $\Sf_{Ired}$ from  $(\As_N,\H_N,D_N)$ to the triple $(\As_{f(M)},\H_{f(M)},D_{f(M)})$, which is nothing but the spectral triple obtained from the manifold $f(M)$. 
By Proposition~\ref{prop: iso-R}, there exists an isomorphism from $(\As_{f(M)},\H_{f(M)},D_{f(M)})$ to $(\As_N,\H_N,D_N)$ in the category $\Sf_{Ired}$, and the conclusion follows by composition with the previous $(\rho,P)$. 

\medskip

Next, we show that the identity functor is an inclusion of the category AS-$\Sf^0_{Ired}$ into the category AS-$\Sf^m_d$, which implies that
$\Cg$ is representative. 

\medskip

Let $\phi: (\As_1,\H_1,D_1) \to (\As_2,\H_2,D_2)$ be an isomorphism in the category AS-$\Sf^0_{Ired}$. 
By the reconstruction theorem~\ref{th: Co-Re}, there are two manifolds $M$ and $N$ in the category $\Mf$ such that 
$(\As_N,\H_N,D_N)$ is isomorphic to $(\As_1,\H_1,D_1)$ and $(\As_M,\H_M,D_M)$ is isomorphic to $(\As_2,\H_2,D_2)$ with isomorphisms $(\phi_N,U_N)$ and $(\phi_M,U_M)$, respectively, in the category $\Sf_{Ired}$. 

By lemma~\ref{lem: full}, $\phi_M\circ\phi\circ\phi^{-1}_N\in \Sf^0_{Ired}$ is the image under $\Cg$ of a spin-preserving Riemannian isometry $f$ that (for manifolds of the same dimension) is a metric isometry in $\Mf_d$.

Since $\phi_M,\phi_N$ are isomorphisms in  AS-$\Sf^0_{Ired}$ and hence, by remark~\ref{rm: eq}, 
isomorphisms also in AS-$\Sf^m_d$, it follows that $\phi=\phi_M^{-1}\circ\Cg(f)\circ\phi_N\in\ $AS-$\Sf^m_d$.

\medskip

Finally, we show that $\Cg$ is full. 

\medskip

Let $M$ and $N$ be manifolds in the category $\Mf_d$-spin and let 
$\phi: \Cg(N)\to \Cg(M)$ be a morphism in the category $\Sf^0_{Ired}$. 
Since we proved above that AS-$\Sf^0_{Ired}$ is a subcategory of AS-$\Sf^m_d$, we have that 
$\phi$ is a morphism in the category $\Sf^m_d$ and from remark~\ref{rk: co-re} there exists a metric isometry $f: M\to N$ in the category $\Mf_d$ such that $\Cg(f)=\phi$. 
Since $\phi$ defines an isomorphism between $\Cg(f(M))$ and $\Cg(M)$ in $\Sf^0_{Ired}$ then, by lemma~\ref{lem: full}, 
$f: M \to f(M)$ is (orientation and) spin-preserving and we are done.
\end{proof} 

Let us summarize the categorical ``relations'' now available with the commutative diagram of functors (please refer to the appendix~\ref{a: st} for a list of the categories involved)
\begin{equation*}
\xymatrix{
{\text{AS-}\Sf^0_{Ired}}\ar@{^{(}->}[rr]&&{\text{AS-}\Sf^m_d} \ar@{^{(}->}[r]_{} &{\text{AS-}\Sf^m}
\\
	& {\text{AS-}\Sf^m_d\text{-spin}}\ar@{_{(}.>}[lu]\ar@{^{(}.>}[ur] & & 
\\
	{\Mf_d\text{-spin}} \ar@{.>>}[ur]_{\Cg}	\ar@{^{(}->}[rr]_{} \ar@{->}[uu]^{\Cg} &
&	{\Mf_d}\ar@{^{(}->}[r]_{}  \ar@{->}[uu]_{\Cg}		& {\Mf,}\ar@{->}[uu]_{\Cg}
}
\end{equation*}
where $\text{AS-}\Sf^m_d\text{-spin}:=\Cg(\Mf_d\text{-spin})$.
The left and right vertical inclusion functors correspond respectively to the embedding described at the beginning of the proof in theorem~\ref{th: eq} and to the 
anti-equivalence in theorem~\ref{th: co-re}; 
the horizontal top-left arrow is the inclusion functor described in the course of the proof of theorem~\ref{th: eq}.  

\bigskip

Loosely speaking, one would expect a similar structure to carry over to the general non-commutative setting, relating 
subcategories of ``spin-preserving'' morphisms in $\Sf^m$ and ``metric-preserving'' morphisms in $\Sf^0_{Ire}$. 
However, in general things might be more complicated. For the time being, we just mention the following result, omitting the (easy) details of the proof.

\begin{proposition}
Let $(\As_1,\H_1,D_1)\xrightarrow{(\phi,\Phi)}(\As_2,\H_2,D_2)$ be a morphism of the spectral triples in the category $\Sf$, where 
$\Phi$ is a coisometry. Then 
\begin{equation*}
d_{D_1}(\omega_1 \circ \phi,\omega_2 \circ \phi)\leq d_{D_2}(\omega_1,\omega_2), \quad \forall \omega_1, \omega_2 \in \Ss(\As_2).
\end{equation*}
\end{proposition}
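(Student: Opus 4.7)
The plan is to exploit Connes' distance formula together with the intertwining properties of the pair $(\phi,\Phi)$ to transport admissible elements from $\As_1$ to $\As_2$. Specifically, given $x\in\As_1$ with $\|[D_1,\pi_1(x)]\|\leq 1$, I will show that its image $\phi(x)\in\As_2$ is again ``admissible'', i.e.~$\|[D_2,\pi_2(\phi(x))]\|\leq 1$. Once this is established, the inequality
\begin{equation*}
|(\omega_1\circ\phi)(x)-(\omega_2\circ\phi)(x)|=|\omega_1(\phi(x))-\omega_2(\phi(x))|\leq d_{D_2}(\omega_1,\omega_2)
\end{equation*}
follows immediately from the distance formula applied on the right-hand side, and taking the supremum over $x$ yields the claim.

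The core step is the operator identity $[D_2,\pi_2(\phi(x))]\,\Phi=\Phi\,[D_1,\pi_1(x)]$. To derive it, I would work first on the dense domain $\mathrm{Dom}\,D_1$, using that $\pi_1(x)$ preserves $\mathrm{Dom}\,D_1$ (which follows from $[D_1,\pi_1(x)]$ being everywhere defined on this domain as a bounded operator, hence extending boundedly to $\H_1$) and that $\Phi(\mathrm{Dom}\,D_1)\subseteq\mathrm{Dom}\,D_2$ (a consequence of the intertwining relation $D_2\circ\Phi=\Phi\circ D_1$ on $\mathrm{Dom}\,D_1$). A direct calculation on $\eta\in\mathrm{Dom}\,D_1$ using both intertwinings gives
\begin{equation*}
[D_2,\pi_2(\phi(x))]\,\Phi\eta=\Phi D_1\pi_1(x)\eta-\Phi\pi_1(x)D_1\eta=\Phi\,[D_1,\pi_1(x)]\,\eta,
\end{equation*}
and the identity extends to all of $\H_1$ by density and boundedness of the operators on both sides.

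Now comes the use of the coisometry assumption: since $\Phi\Phi^*=\mathrm{Id}_{\H_2}$, multiplying on the right by $\Phi^*$ yields
\begin{equation*}
[D_2,\pi_2(\phi(x))]=\Phi\,[D_1,\pi_1(x)]\,\Phi^*.
\end{equation*}
Because $\Phi$ is a coisometry, $\|\Phi\|=\|\Phi^*\|=1$, so taking operator norms gives $\|[D_2,\pi_2(\phi(x))]\|\leq\|[D_1,\pi_1(x)]\|$, proving the required admissibility transport.

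I expect the main technical obstacle to be purely bookkeeping around the unbounded operators $D_j$: verifying that $\pi_1(x)$ preserves $\mathrm{Dom}\,D_1$ and that $\Phi$ maps $\mathrm{Dom}\,D_1$ into $\mathrm{Dom}\,D_2$, so that the chain of equalities leading to the operator identity is justified before passing to norm closure. Once these domain issues are dispatched, the rest of the argument is a short algebraic manipulation and an application of the distance formula.
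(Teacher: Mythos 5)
Your argument is correct and is essentially the intended one: the paper omits this proof as ``easy'', and the natural argument is exactly your transport of admissible elements, via the identity $[D_2,\pi_2(\phi(x))]=\Phi\,[D_1,\pi_1(x)]\,\Phi^*$ obtained from the two intertwining relations on $\mathrm{Dom}\,D_1$ together with the coisometry property $\Phi\Phi^*=\mathrm{Id}_{\H_2}$, followed by taking the supremum in the defining formula for $d_{D_1}$. The domain bookkeeping you flag ($\pi_1(x)$ preserving $\mathrm{Dom}\,D_1$ and $\Phi(\mathrm{Dom}\,D_1)\subseteq\mathrm{Dom}\,D_2$) is just the standard reading of the spectral-triple and morphism axioms, so no genuine gap remains.
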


\bigskip

We have discussed only the case of spin-manifolds.
We also expect analogous statements to hold true for spin$^c$ manifolds. 

\section{Final Comments}

The main result presented in this paper is a reformulation of the Gel'fand-Na\u\i mark duality in the light of Connes' reconstruction theorem for spin Riemannian manifolds. 
It seems to us that the functoriality of such correspondence has some intriguing appeal  and one could ask to which extent it is possible to ``lift'' it to some of the other main objects entering the scene, notably the Dirac operators. 
This issue is presently under investigation. 

From the perspective of this work, the use of the spin structure has been only instrumental in recasting Gel'fand-Na\u\i mark theorem in the light of the Connes' reconstruction theorem, and actually it might appear as an unnecessary complication: it introduces some redundancy in the main result and, when incorporated tout-court in the setup, it does not lead to a genuine categorical anti-equivalence.

This might suggest that in a successive step one could try to get rid of such a structure, thus obtaining a different kind of categorical duality between a metric category of (isometries of) Riemannian manifolds and suitable categories of spectral data (for example considering spectral triples arising from the signature Dirac operator in place of those arising from the usual Atiyah-Singer Dirac operator). 
Although several variants of morphisms can be introduced between spectral triples (see~\cite[Section~4.1]{BCL0} for details), corresponding to the ``rigidity'' imposed on the maps between manifolds (totally geodesic isometries, Riemannian isometries, \dots), this line of thought does not require significant structural modifications in the definitions of morphisms for the categories of spectral geometries involved (as a pair of maps at the algebra and the Hilbert space level) and will be pursued elsewhere (see~\cite{Be} for more details).

The actual construction of functors (and dualities) from categories of spin Riemannian manifolds (with different dimensions) to ``suitable'' categories of spectral triples (of the Atiyah-Singer ``type'') is a more interesting goal whose main obstruction is the lack of a sufficiently general notion of pull-back of spinor fields. In order to solve this problem it will be necessary to construct ``relational categories'' of spectral triples, via ``spectral congruences'' and/or ``spectral spans'' following the lines already announced in the seminar slides~\cite{Be}. We will return to these topics in forthcoming papers. 

Finally, we observe that
after the first draft of the present manuscript had been already submitted for publication, a few other notable works on categorical non-commutative geometry appeared. 
In particular we mention the recent paper by B.Mesland~\cite{Me} where, in the setting of KK-theory, a notion of Morita morphism between spectral triples is defined via very specific Connes' correspondence bimodules. 

\appendix 

\section{List of Categories of Spectral Triples}\label{a: st}

\begin{itemize}
\item[$\Sf$] the category whose objects are spectral triples $(\As,\H,D)$ and 
whose morphisms from $(\As,\H,D)$ to $(\As',\H',D')$ are the pairs $(\phi,\Phi)$, 
where $\phi:\As\to \As'$ is a unital \hbox{$*$-homomorphism} and $\Phi:\H\to \H'$ is a 
(bounded) linear map such that $\Phi(ax)=\phi(a)\Phi(x)$, 
$\Phi(D\xi)=D'(\Phi \xi)$, 
for all $a\in \As$, $x\in \H$ and $\xi \in {\rm Dom}(D)$. 
\item[$\Sf_r$]
the category  consisting of real spectral triples $(\As,\H,D,J)$ with morphisms 
$(\phi,\Phi)$ defined as for $\Sf$ but in addition intertwining the real 
structures, i.e.  $\Phi J=J'\Phi$. 
\item[$\Sf_e$]
the category consisting of even spectral triples $(\As,\H,D,\Gamma)$ and morphisms 
$(\phi,\Phi)$ of triples such that, in addition, $\Phi \Gamma=\Gamma'\Phi$. 

\item[$\Sf_{re}$] defined as the category consisting of real, even spectral 
triples and morphisms of triples intertwining both the real structure and the 
grading.

\item[$\Sf_I$] the subcategory of $\Sf$ with morphism $(\phi,\Phi)$, with $\phi$ 
surjective and $\Phi$ co-isometric. Similarly define
$\Sf_{I\bullet}$ as a subcategory of $\Sf_{\bullet}$, where $\bullet$ stands for 
any of the choices $r,e,re$. 

\item[$\Sf^m$] the category of spectral triples from $\Sf$ with arrows given by unital $*$-epimorphisms $\phi:\As\to \As'$ such that 
$d_D(\phi^\bullet(\omega_1),\phi^\bullet(\omega_2))=d_{D'}(\omega_1,\omega_2)$ for all pure states $\omega_1,\omega_2$ of 
$\As'$. 

\item[$\Sf^0$] the category with the same objects of $\Sf$ obtained by quotienting 
the arrows in $\Sf$ via the equivalence relation 
$(\phi_1,\Phi_1)\simeq(\phi_2,\Phi_2)$ iff $\phi_1=\phi_2$. 
Similarly $\Sf^0_{Ire}$ is obtained from $\Sf_{Ire}$ as a quotient via the same 
equivalence relation. 

\item[AS-$\Sf$] the full subcategory of $\Sf$ with objects the direct sums of 
Atiyah-Singer spectral triples, i.e. those triples that satisfy 
the conditions of Connes' reconstruction theorem. 
\item[AS-$\Sf_{Ire}$] is the full subcategory of $\Sf_{Ire}$ with objects the
direct sums of Atiyah-Singer spectral triples.

\item[AS-$\Sf^0_{Ire}$] 
is the full subategory of $\Sf^0_{Ire}$ with objects the direct sums of Atiyah-Singer spectral triples. 

\end{itemize} 
The suffix $d$ (as in $\Sf_d$, AS-$\Sf_d$, AS-$\Sf_{Ired}$ or $\Sf^m_d$) indicates the  
subcategory consisting only of morphisms between triples of the same 
dimension.

\bigskip

\emph{Notes and acknowledgments.} 
\label{sec: ac}
\\
We acknowledge the support provided by the Thai Research Fund through the ``Career Development Grant'' n.~RSA4780022. 

The first arXived version of this work (18 December 2008) was submitted to the Bullettin of the Korean Mathematical Society and published after revision on 11 March 2011. The following is a version of the printed paper prepared only for upload to the arXiv. 

Finally it is a pleasure to thank Starbucks Coffee at the $1^\text{st}$ floor of Emporium Tower Sukhumvit for providing the kind relaxing environment where essentially all of this work has been written in the first half of 2005.

\end{document}